\newtheorem{theorem}{Theorem}
\newtheorem{conjecture}[theorem]{Conjecture}
\newtheorem{lemma}[theorem]{Lemma}
\newtheorem{proposition}[theorem]{Proposition}
\newenvironment{proof}[1][Proof]{\noindent{\textbf {#1}  }}  {\hfill$\Box$\bigskip}
\begin{document}

\title{The $p$-spectral radius of $k$-partite and $k$-chromatic uniform
hypergraphs\thanks{\textbf{AMS MSC:} 05C65; 05C35\textit{.}}
\thanks{\textbf{Keywords:} \textit{uniform hypergraph; }$p$-\textit{spectral
radius; }$k$\textit{-partite hypergraph; }$k$\textit{-chromatic hypergraph.}}}
\author{L. Kang\thanks{Department of Mathematics, Shanghai University, Shanghai,
200444, PR China. email: \textit{lykang@shu.edu.cn }} , V.
Nikiforov\thanks{Department of Mathematical Sciences, University of Memphis,
Memphis TN 38152, USA; email: \textit{vnikifrv@memphis.edu}} , and X.
Yuan\thanks{Corresponding author, Department of Mathematics, Shanghai
University, Shanghai, 200444, PR China. email:
\textit{xiyingyuan2007@hotmail.com}}}
\maketitle

\begin{abstract}
The $p$-spectral radius of a uniform hypergraph $G\ $of order $n$ is defined
for every real number $p\geq1$ as
\[
\lambda^{\left(  p\right)  }\left(  G\right)  =\max_{\left\vert x_{1}%
\right\vert ^{p}\text{ }+\text{ }\cdots\text{ }+\text{ }\left\vert
x_{n}\right\vert ^{p}\text{ }=\text{ }1}r!\sum_{\{i_{1},\ldots,i_{r}\}\in
E\left(  G\right)  }x_{i_{1}}\cdots x_{i_{r}}.
\]
It generalizes several hypergraph parameters, including the Lagrangian, the
spectral radius, and the number of edges. The paper presents solutions to
several extremal problems about the $p$-spectral radius of $k$-partite and
$k$-chromatic hypergraphs of order $n.$ Two of the main results are:

(I) Let $k\geq r\geq2,$ and let $G\ $be a $k$-partite $r$-graph of order $n.$
For every $p>1,$
\[
\lambda^{\left(  p\right)  }\left(  G\right)  <\lambda^{\left(  p\right)
}\left(  T_{k}^{r}\left(  n\right)  \right)  ,
\]
unless $G=T_{k}^{r}\left(  n\right)  ,$ where $T_{k}^{r}\left(  n\right)  $ is
the complete $k$-partite $r$-graph of order $n,$ with parts of size
$\left\lfloor n/k\right\rfloor $ or $\left\lceil n/k\right\rceil $.

(II) Let $k\geq2,$ and let $G\ $be a $k$-chromatic $3$-graph of order $n.$ For
every $p\geq1,$
\[
\lambda^{\left(  p\right)  }\left(  G\right)  <\lambda^{\left(  p\right)
}\left(  Q_{k}^{3}\left(  n\right)  \right)  ,
\]
unless $G=Q_{k}^{3}\left(  n\right)  ,$ where $Q_{k}^{3}\left(  n\right)  $ is
a complete $k$-chromatic $3$-graph of order $n,$ with classes of size
$\left\lfloor n/k\right\rfloor $ or $\left\lceil n/k\right\rceil $.

The latter statement generalizes a result of Mubayi and Talbot.

\end{abstract}

\section{\label{Def}Introduction}

In this paper we study the maximum $p$-spectral radius of $k$-partite and
$k$-chromatic uniform hypergraphs of given order.

Let us recall the definition of the $p$-spectral radius of graphs. Suppose
that $r\geq2$ and let $G$ be an $r$-uniform graph of order $n$. The
\emph{polynomial form} of $G$ is a multilinear function $P_{G}:\mathbb{R}%
^{n}\rightarrow\mathbb{R}^{1}$ defined for any vector $\left[  x_{i}\right]
\in\mathbb{R}^{n}$ as
\[
P_{G}\left(  \left[  x_{i}\right]  \right)  :=r!\sum_{\left\{  i_{1}%
,\ldots,i_{r}\right\}  \in E\left(  G\right)  }x_{i_{1}}\cdots x_{i_{r}}.
\]
Now, for any real number $p\geq1,$ the $p$\emph{-spectral radius} of $G$ is
defined as
\begin{equation}
\lambda^{\left(  p\right)  }\left(  G\right)  :=\max_{\left\vert
x_{1}\right\vert ^{p}+\cdots+\left\vert x_{n}\right\vert ^{p}=1}P_{G}\left(
\mathbf{x}\right)  . \label{defsa}%
\end{equation}
Note that $\lambda^{\left(  p\right)  }$ is a multifaceted parameter, as
$\lambda^{\left(  1\right)  }\left(  G\right)  $ is the Lagrangian of $G,$
$\lambda^{\left(  r\right)  }\left(  G\right)  $ is its spectral radius, and
$\lim_{p\rightarrow\infty}\lambda^{\left(  p\right)  }\left(  G\right)
n^{r/p}=r!e\left(  G\right)  $. The $p$-spectral radius has been introduced in
\cite{KLM13} and subsequently studied in \cite{NikA}, \cite{NikB}, and
\cite{NikC}.\medskip

Next, let us recall a few definitions about $k$-partite and $k$-chromatic
uniform hypergraphs. Let $k\geq r\geq2.$ An $r$-graph $G\ $is called
$k$\emph{-partite}\textbf{ }if its vertex set $V\left(  G\right)  $ can be
partitioned into $k$ sets so that each edge contains at most one vertex from
each set. An edge maximal $k$-partite $r$-graph is called \emph{complete }%
$k$\emph{-partite}. We write $T_{k}^{r}\left(  n\right)  $ for the complete
$k$-partite $r$-graph of order $n,$ with parts of size $\left\lfloor
n/k\right\rfloor $ or $\left\lceil n/k\right\rceil ;$ note that $T_{k}%
^{2}\left(  n\right)  $ is the Tur\'{a}n graph $T_{k}\left(  n\right)  $.

Further, an $r$-graph $G\ $is called $k$\emph{-chromatic}\textbf{ }if
$V\left(  G\right)  $ can be partitioned into $k$ sets so that no set contains
an edge. An edge maximal $k$-chromatic $r$-graph is called \emph{complete }%
$k$\emph{-chromatic}. We write $Q_{k}^{r}\left(  n\right)  $ for the complete
$k$-chromatic $r$-graph of order $n,$ with vertex sets of size $\left\lfloor
n/k\right\rfloor $ or $\left\lceil n/k\right\rceil ;$ note that $Q_{k}%
^{2}\left(  n\right)  =T_{k}\left(  n\right)  .$\medskip

For $2$-graphs, relations between the chromatic number and $\lambda^{\left(
p\right)  }$ have been long known. For example, if $G$ is a $k$-chromatic
$2$-graph of order $n,$ the result of Motzkin and Straus \cite{MoSt65} implies
that $\lambda^{\left(  1\right)  }\left(  G\right)  \leq1-1/k,$ Cvetkovi\'{c}
\cite{Cve72} has shown that $\lambda^{\left(  2\right)  }\left(  G\right)
\leq\left(  1-1/k\right)  n,$ and Edwards and Elphick \cite{EdEl83} improved
this to $\lambda^{\left(  2\right)  }\left(  G\right)  \leq\sqrt{2\left(
1-1/k\right)  e\left(  G\right)  }.$ Finally, Feng et al. \cite{FLZ07} have
shown that $\lambda^{\left(  2\right)  }\left(  G\right)  \leq\lambda^{\left(
2\right)  }\left(  T_{k}\left(  n\right)  \right)  .$ In fact, all these
inequalities have been improved by replacing the chromatic number with the
clique number of $G$.

However, for hypergraphs there are very few similar results. For example, if
$G$ is a $k$-chromatic $3$-graph of order $n$, Mubayi and Talbot \cite{MuTa08}
showed that
\begin{equation}
\lambda^{\left(  1\right)  }\left(  G\right)  \leq\lambda^{\left(  1\right)
}\left(  Q_{k}^{3}\left(  n\right)  \right)  . \label{MT}%
\end{equation}
Recently, in \cite{NikA} it was shown that if $G$ is a $k$-partite $r$-graph
of order $n$ and $p>1$, then
\begin{equation}
\lambda^{\left(  p\right)  }\left(  G\right)  \leq r!\binom{k}{r}%
^{1/p}k^{-r/p}e\left(  G\right)  ^{1-1/p}, \label{b1}%
\end{equation}
and
\begin{equation}
\lambda^{\left(  p\right)  }\left(  G\right)  \leq r!\binom{k}{r}%
k^{-r}n^{r-r/p}. \label{b2}%
\end{equation}
Also, if $G$ is a $k$-chromatic $r$-graph of order $n$ and $p>1,$ then%
\begin{equation}
\lambda^{\left(  p\right)  }\left(  G\right)  \leq\left(  1-\frac{1}{k^{r-1}%
}\right)  ^{1/p}\left(  r!e\left(  G\right)  \right)  ^{1-1/p}, \label{b3}%
\end{equation}
and
\begin{equation}
\lambda^{\left(  p\right)  }\left(  G\right)  \leq\left(  1-\frac{1}{k^{r-1}%
}\right)  n^{r-r/p}. \label{b4}%
\end{equation}
Bounds (\ref{b1})-(\ref{b4}) are quite tight, in view of the graphs $T_{k}%
^{r}\left(  n\right)  $ and $Q_{k}^{r}\left(  n\right)  $, but they can made
more precise, as shown in this paper.

The above list leaves quite a few gaps to be filled in. To our surprise, most
of these problems turned out to be astonishing challenges, much more
complicated than the corresponding results for $\lambda^{\left(  2\right)  }$
of $2$-graphs. We solved several problems to a satisfactory level, although
our proofs are generally quite long and technical. However, we could not solve
two central problems stated below as Conjectures \ref{co1} and \ref{co2}. It
is evident that new methods are necessary to attack these conjectures, and we
hope to have prepared some ground for them.\medskip

We proceed with statement and discussion of our main results, first for
$\lambda^{\left(  1\right)  }\left(  G\right)  $ of $k$-partite $r$-graphs.

\begin{theorem}
\label{MSt}Let $k\geq r\geq2,$ and let $G$ be a $k$-partite $r$-graph of order
$n$ with partition sets $V_{1},\ldots,V_{k}.$ Then
\begin{equation}
\lambda^{\left(  1\right)  }\left(  G\right)  \leq r!\binom{k}{r}k^{-r}.
\label{l1}%
\end{equation}

(I) If $\left[  x_{i}\right]  $ is a positive $n$-vector such that
$x_{1}+\cdots+x_{n}=1$ and
\begin{equation}
\lambda^{\left(  1\right)  }\left(  G\right)  =P_{G}\left(  \left[
x_{i}\right]  \right)  =r!\binom{k}{r}k^{-r}, \label{max1}%
\end{equation}
then $G$ is complete $k$-partite and%
\begin{equation}
\sum_{i\in V_{j}}x_{i}=\frac{1}{k},\text{ \ }j=1,\ldots,k. \label{c1}%
\end{equation}

(II) If $G$ is complete $k$-partite and $\left[  x_{i}\right]  $ is a
nonnegative $n$-vector satisfying (\ref{c1}), then (\ref{max1}) holds.
\end{theorem}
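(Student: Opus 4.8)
The plan is to compare $G$ with the complete $k$-partite $r$-graph $\widehat G$ on the same partition $V_{1},\dots ,V_{k}$ and to reduce both the inequality (\ref{l1}) and its equality case to an extremal property of the $r$-th elementary symmetric polynomial on the standard simplex. Throughout I put $y_{j}=\sum_{i\in V_{j}}x_{i}$. The edges of $\widehat G$ are exactly the $r$-subsets of $V(G)$ meeting $r$ distinct classes, so regrouping the monomials of the polynomial form according to the set of classes they use gives
\[
P_{\widehat G}(\mathbf x)=r!\sum_{\substack{J\subseteq\{1,\dots ,k\}\\ |J|=r}}\ \prod_{j\in J}y_{j}=r!\,e_{r}(y_{1},\dots ,y_{k}),
\]
$e_{r}$ denoting the $r$-th elementary symmetric polynomial. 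This already disposes of part (II): if $G$ is complete $k$-partite then $G=\widehat G$, and if moreover $\mathbf x$ satisfies (\ref{c1}) then each of the $\binom{k}{r}$ products $\prod_{j\in J}y_{j}$ equals $k^{-r}$, so $P_{G}(\mathbf x)=r!\binom{k}{r}k^{-r}$, which is (\ref{max1}).

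For (\ref{l1}) and part (I), note first that the maximum in the definition of $\lambda^{(1)}(G)$ is attained (continuity on a compact set) and, since $|P_{G}(\mathbf x)|\le P_{G}(|\mathbf x|)$ where $|\mathbf x|$ is the entrywise absolute value, that it is attained at a nonnegative $\mathbf x$ with $x_{1}+\dots +x_{n}=1$. Because $G$ is $k$-partite, $E(G)\subseteq E(\widehat G)$, so for this $\mathbf x$
\[
P_{G}(\mathbf x)\le P_{\widehat G}(\mathbf x)=r!\,e_{r}(y_{1},\dots ,y_{k}),\qquad y_{j}\ge0,\ \ \textstyle\sum_{j}y_{j}=1 .
\]
Thus (\ref{l1}) follows once I prove the following claim: \emph{if $y_{1},\dots ,y_{k}\ge0$ and $\sum_{j}y_{j}=1$, then $e_{r}(y_{1},\dots ,y_{k})\le\binom{k}{r}k^{-r}$, with equality only for $y_{1}=\dots =y_{k}=1/k$.} I would prove this by taking a maximizer $\mathbf y^{\ast}$ of $e_{r}$ on the simplex and setting $S=\{j:y^{\ast}_{j}>0\}$, $t=|S|$. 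Since $e_{r}(\mathbf y^{\ast})>0$ we have $t\ge r$; the stationarity conditions $\partial e_{r}/\partial y_{j}=\partial e_{r}/\partial y_{j'}$ for $j,j'\in S$, combined with $\partial e_{r}/\partial y_{j}=e_{r-1}(\mathbf y^{\ast}\setminus y_{j})$ and one further expansion by the remaining variable, give $(y^{\ast}_{j'}-y^{\ast}_{j})\,e_{r-2}(\mathbf y^{\ast}\setminus\{y_{j},y_{j'}\})=0$; the last factor is positive because the vector in question has $t-2\ge r-2$ positive coordinates, so $\mathbf y^{\ast}$ is constant on $S$. Then $e_{r}(\mathbf y^{\ast})=\binom{t}{r}t^{-r}=\frac{1}{r!}\prod_{i=0}^{r-1}(1-i/t)$, which is strictly increasing in $t$ for $t\ge r$, so maximality forces $t=k$ and $\mathbf y^{\ast}=(1/k,\dots ,1/k)$, proving the claim.

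For part (I), suppose $\mathbf x$ is positive with $x_{1}+\dots +x_{n}=1$ and $P_{G}(\mathbf x)=r!\binom{k}{r}k^{-r}$. Then both displayed inequalities are equalities: equality in the claim gives $y_{j}=1/k$ for every $j$, which is (\ref{c1}) (and in particular forces every $V_{j}\ne\emptyset$), while $P_{G}(\mathbf x)=P_{\widehat G}(\mathbf x)$ means $\sum_{e\in E(\widehat G)\setminus E(G)}\prod_{i\in e}x_{i}=0$, so positivity of $\mathbf x$ forces $E(G)=E(\widehat G)$, i.e.\ $G$ is complete $k$-partite. The two comparisons with $\widehat G$ and this last reconstruction step are routine; the one place that needs genuine care is the equality analysis in the claim, above all ruling out maximizers of $e_{r}$ supported on a proper subset of the coordinates. (One could instead invoke Maclaurin's inequality together with its equality case, but the coordinates equal to $0$ would still require separate attention.)
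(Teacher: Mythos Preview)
Your proof is correct. The paper proves the bound \eqref{l1} by a different first move: it picks a nonnegative eigenvector with the \emph{minimum} number of positive entries and uses a weight-shifting argument (if two positive entries lie in the same partition class, transfer all the weight to the one with the larger partial derivative) to force at most one positive entry per class; then Maclaurin's inequality on the $m\le k$ surviving variables gives \eqref{l1}. You instead aggregate immediately to the class sums $y_{j}$, compare $G$ with the complete $k$-partite graph $\widehat G$, and maximise $e_{r}(y_{1},\dots ,y_{k})$ over the simplex directly via stationarity together with the strict monotonicity of $\binom{t}{r}t^{-r}$ in $t$. For parts (I) and (II) the two proofs essentially coincide: the paper also passes to the $y_{j}$'s and invokes Maclaurin with its equality condition, and the completeness of $G$ follows (as you make explicit) from positivity of $\mathbf x$. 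Your route is tidier in that one computation handles both the bound and its equality analysis, and your treatment of boundary maximisers of $e_{r}$ is more explicit than the paper's appeal to Maclaurin; the paper's weight-shifting step, on the other hand, is the classical Motzkin--Straus manoeuvre and has the advantage of being reusable in many related settings.
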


Clause \emph{(II) }of\emph{ }Theorem \ref{MSt} shows that there are many
non-isomorphic $r$-graphs achieving equality in (\ref{l1}). However, this is
not the case if $p>1,$ as shown in the following theorem.

\begin{theorem}
\label{th1}Let $k\geq r\geq2,$ and let $G\ $be a $k$-partite $r$-graph of
order $n.$ For every $p>1,$
\[
\lambda^{\left(  p\right)  }\left(  G\right)  <\lambda^{\left(  p\right)
}\left(  T_{k}^{r}\left(  n\right)  \right)  ,
\]
unless $G=T_{k}^{r}\left(  n\right)  .$
\end{theorem}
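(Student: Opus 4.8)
The plan is to reduce the general case $p>1$ to the Lagrangian case $p=1$ already handled in Theorem~\ref{MSt}, using a standard normalization trick for the $p$-spectral radius. Fix $p>1$ and let $\mathbf{x}=[x_i]$ be an optimal vector for $\lambda^{(p)}(G)$; by replacing entries with their absolute values we may assume $\mathbf{x}\geq 0$, so $\sum_i x_i^p=1$ and $P_G(\mathbf{x})=\lambda^{(p)}(G)$. Now set $y_i:=x_i^{p}$, so that $[y_i]$ is a nonnegative vector with $\sum_i y_i=1$; I want to feed it into the Lagrangian bound \eqref{l1}. The point is that since $G$ is $k$-partite we can write $P_G(\mathbf{x})=r!\sum_{\{i_1,\dots,i_r\}\in E(G)}x_{i_1}\cdots x_{i_r}$, and each edge meets each part at most once, so grouping the product over the parts it hits and applying the AM--GM / power-mean inequality $x_{i_1}\cdots x_{i_r}\leq\frac{1}{?}(\dots)$ should let me bound $P_G(\mathbf{x})$ by something expressed through the $y_i$ and the part-sums $s_j:=\sum_{i\in V_j}x_i^p=\sum_{i\in V_j}y_i$.

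More precisely, the key inequality to establish is
\[
\lambda^{(p)}(G)=P_G(\mathbf{x})\leq r!\binom{k}{r}k^{-r}\Bigl(\sum_{j=1}^{k}k\,s_j^{1/p}\cdot\tfrac1k\Bigr)^{\!?}
\]
— that is, I expect a bound of the shape $\lambda^{(p)}(G)\le \lambda^{(1)}(G)\cdot(\text{norm correction})$, combined with the explicit evaluation of $\lambda^{(p)}(T_k^r(n))$. A cleaner route: use the known fact (from \cite{NikA}, essentially \eqref{b2}) that the extremal vector for $\lambda^{(p)}(T_k^r(n))$ is the one that is constant on each part with mass $1/k$ per part; compute $\lambda^{(p)}(T_k^r(n))$ explicitly as $r!\binom{k}{r}k^{-r}n^{r-r/p}$ when $k\mid n$ and the appropriate variant otherwise. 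Then for arbitrary $k$-partite $G$, pass to $[y_i]=[x_i^p]$ and apply Theorem~\ref{MSt} to get $P_G(\mathbf{x})\le \big(\text{something}\big)$, tracking carefully that $P_G(\mathbf{x})$ is \emph{not} $P_G([y_i])$ but is controlled by it via convexity of $t\mapsto t^{1/p}$ applied edgewise.

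The heart of the argument, and the step I expect to be the main obstacle, is the \textbf{strict} inequality and the characterization of the equality case. Equality in the chain forces equality in each invocation of AM--GM/power-mean, which should force $\mathbf{x}$ to be constant on each part; equality in Theorem~\ref{MSt}(I) then forces $G$ to be complete $k$-partite and each part-sum of $[y_i]$ to equal $1/k$, i.e. $|V_j|x^p=$ const, so the parts must be as equal as the vector allows. The genuinely delicate point is handling the case $k\nmid n$: then $T_k^r(n)$ has parts of two sizes and its optimal vector is \emph{not} globally constant (it is constant within the big parts and within the small parts, with different values), so one must verify that among all complete $k$-partite $r$-graphs on $n$ vertices the balanced partition strictly maximizes $\lambda^{(p)}$. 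I would do this by a swapping/compression argument: if some part has at least two more vertices than another, move a vertex across and show $\lambda^{(p)}$ strictly increases, using the optimal vector of the new graph restricted appropriately as a test vector for comparison, and invoking strict convexity of $t\mapsto t^{p}$ for $p>1$ to get strictness. Assembling these pieces — explicit value of $\lambda^{(p)}(T_k^r(n))$, the power-transform reduction to Theorem~\ref{MSt}, the equality analysis, and the balancing lemma for unequal parts — yields the claim.
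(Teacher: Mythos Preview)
Your primary approach---the power transform $y_i=x_i^p$ reducing to the Lagrangian bound of Theorem~\ref{MSt}---cannot prove Theorem~\ref{th1}. That reduction (the Power Mean inequality applied edgewise) yields exactly
\[
\lambda^{(p)}(G)\le r!\,e(G)^{1-1/p}\Bigl(\binom{k}{r}k^{-r}\Bigr)^{1/p}\le r!\binom{k}{r}k^{-r}n^{r-r/p},
\]
which is Theorem~\ref{th2}, not Theorem~\ref{th1}. When $k\nmid n$ the right side is \emph{strictly larger} than $\lambda^{(p)}(T_k^r(n))$, so this bound cannot single out $T_k^r(n)$ as the unique maximizer. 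Your question-mark inequalities are a symptom of this: there is no convexity/AM--GM step that converts $P_G(\mathbf{x})$ into something bounded by $\lambda^{(p)}(T_k^r(n))$ rather than by the cruder $r!\binom{k}{r}k^{-r}n^{r-r/p}$.

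Consequently the entire content of Theorem~\ref{th1} is the balancing step you list last, and the paper's proof consists essentially of that step alone; it is more delicate than your sketch suggests. First one must show the maximizer is complete $k$-partite; this needs the lemma that every nonnegative eigenvector of a complete $k$-partite $r$-graph is \emph{positive} (not obvious for general $p>1$, and absent from your plan), so that proper subgraphs have strictly smaller $\lambda^{(p)}$. Then, with the eigenvector constant on parts, one assumes $n_k-n_1\ge 2$, redistributes the two extreme parts, and builds a test vector for the \emph{new} graph from the \emph{old} eigenvector (the opposite direction from what you wrote). The comparison $P_{G'}(\mathbf{y})>P_G(\mathbf{x})$ splits by the parity of $n_1+n_k$; in the odd case no single test vector works for all $p>1$, and the paper treats $p>9/8$ and $1<p\le 9/8$ separately, the latter range requiring a second test vector and an indirect argument. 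Your one-line description (``move a vertex across and invoke strict convexity of $t\mapsto t^p$'') does not anticipate any of this and would not go through as written.
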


Although Theorem \ref{th1} is as good as one can get, it is also useful to
have explicit bounds which are close to the best possible one. Thus, for
reader's sake we shall give self-contained proofs of bounds (\ref{b1}) and
(\ref{b2}).

\begin{theorem}
\label{th2}Let $k\geq r\geq2,$ and let $G\ $be a $k$-partite $r$-graph of
order $n.$ If $p>1,$ then
\[
\lambda^{\left(  p\right)  }\left(  G\right)  \leq r!\binom{k}{r}%
^{1/p}k^{-r/p}e\left(  G\right)  ^{1-1/p}.
\]
Also, if $p>1,$ then
\begin{equation}
\lambda^{\left(  p\right)  }\left(  G\right)  <r!\binom{k}{r}k^{-r}n^{r-r/p},
\label{in1}%
\end{equation}
unless $k|n$ and $G=T_{k}^{r}\left(  n\right)  .$
\end{theorem}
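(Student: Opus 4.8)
The plan is to reduce both inequalities to Theorem \ref{MSt} via a normalization/duality argument. For the first bound, suppose $G$ is a $k$-partite $r$-graph and let $\mathbf{x}=[x_i]$ be a nonnegative eigenvector achieving $\lambda^{(p)}(G)$, so $\sum_i x_i^p=1$. The idea is to pass from the $\ell^p$-sphere to the $\ell^1$-simplex: set $y_i=x_i^p$, so that $\sum_i y_i=1$ and $x_i=y_i^{1/p}$. Then
\[
\lambda^{(p)}(G)=r!\sum_{\{i_1,\dots,i_r\}\in E(G)} y_{i_1}^{1/p}\cdots y_{i_r}^{1/p}.
\]
Applying Hölder's inequality with exponents $p$ (on the product of the $y^{1/p}$ factors, viewing each edge-term) against the constant $1$, or more precisely applying the power-mean/Hölder inequality to the sum over edges, one gets
\[
r!\sum_{e\in E(G)}\prod_{i\in e} y_i^{1/p}\ \le\ \Bigl(r!\sum_{e\in E(G)}\prod_{i\in e} y_i\Bigr)^{1/p}\bigl(r!e(G)\bigr)^{1-1/p}.
\]
Now $r!\sum_{e}\prod_{i\in e} y_i = P_G([y_i]) \le \lambda^{(1)}(G)\le r!\binom{k}{r}k^{-r}$ by \eqref{l1} of Theorem \ref{MSt}, since $[y_i]$ is a nonnegative vector summing to $1$. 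Substituting gives $\lambda^{(p)}(G)\le r!\binom{k}{r}^{1/p}k^{-r/p}e(G)^{1-1/p}$, as desired.

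For the second bound \eqref{in1}, the natural route is to combine the first bound with the trivial edge count $e(G)\le e(T_k^r(n))\le\binom{k}{r}(n/k)^r$. Plugging $e(G)\le\binom{k}{r}(n/k)^r$ into the first inequality yields
\[
\lambda^{(p)}(G)\le r!\binom{k}{r}^{1/p}k^{-r/p}\Bigl(\binom{k}{r}(n/k)^r\Bigr)^{1-1/p}
= r!\binom{k}{r}k^{-r}n^{r-r/p},
\]
where the exponents of $\binom{k}{r}$, of $k$, and of $n$ are each checked by adding $1/p$ and $1-1/p$. This gives the non-strict version of \eqref{in1}; the point then is to argue the inequality is strict unless $k\mid n$ and $G=T_k^r(n)$. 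Equality throughout forces two things: equality in the Hölder step, which requires all the edge-products $\prod_{i\in e}y_i$ to be equal, hence (combined with $P_G([y_i])$ attaining $\lambda^{(1)}(G)=r!\binom{k}{r}k^{-r}$) by clause (I) of Theorem \ref{MSt} that $G$ is complete $k$-partite with $\sum_{i\in V_j}y_i=1/k$ for every part; and equality in $e(G)\le\binom{k}{r}(n/k)^r$, which needs every part to have size exactly $n/k$, i.e.\ $k\mid n$. One then checks that under these conditions the optimizing vector must be uniform on each part, forcing $G=T_k^r(n)$.

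The main obstacle I anticipate is the strictness analysis in the last step: tracking the equality cases through both the Hölder inequality and the edge-count bound simultaneously, and verifying that "complete $k$-partite with all parts of size $n/k$'' together with the eigenvector equation genuinely pins down $G=T_k^r(n)$ rather than merely some complete $k$-partite graph with balanced parts (these coincide when $k\mid n$, so the content is really in showing the parts must be balanced and $G$ must be edge-maximal). A secondary subtlety is justifying that the extremal eigenvector $\mathbf{x}$ can be taken strictly positive, or handling the boundary case where some $x_i=0$; this is standard (a vanishing coordinate lets one delete a vertex and strictly increase the normalized value, contradicting extremality once $p>1$), but it needs to be stated cleanly so that clause (I) of Theorem \ref{MSt}, which assumes a positive vector, applies. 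I expect the Hölder computation itself and the exponent bookkeeping to be entirely routine.
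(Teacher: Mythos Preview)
Your proposal is correct and follows essentially the same route as the paper: apply the power-mean/H\"older inequality to reduce $\lambda^{(p)}$ to $\lambda^{(1)}$, invoke Theorem~\ref{MSt} to bound the latter by $r!\binom{k}{r}k^{-r}$, and then insert an edge bound $e(G)\le \binom{k}{r}(n/k)^r$ to obtain \eqref{in1}. The one place where you diverge is the justification of the edge bound and its equality case: you treat $e(G)\le e_r(n_1,\ldots,n_k)\le \binom{k}{r}(n/k)^r$ as a direct Maclaurin-type fact, whereas the paper passes to the $2$-section $G_2$, observes $e(G)\le k_r(G_2)$, and then cites Zykov's theorem on the number of $r$-cliques in a $K_{k+1}$-free $2$-graph. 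Your route is more elementary and self-contained; the paper's route has the mild advantage that the equality case (which forces $G_2=T_k(n)$ and hence $G=T_k^r(n)$) comes packaged with Zykov's statement.

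One remark on your equality analysis: you are working harder than necessary. Strictness in \eqref{in1} already follows from the edge-count step alone, since $e(G)=\binom{k}{r}(n/k)^r$ forces $G$ to be complete $k$-partite with all parts of size exactly $n/k$. You do not need to track equality in the H\"older step or worry about positivity of the eigenvector in order to invoke clause~(I) of Theorem~\ref{MSt}; the paper's proof accordingly ignores that issue entirely.
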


Note that Theorem \ref{th2} requires that $p>1,$ as the conditions for
equality are different from those for $\lambda^{\left(  1\right)  }\left(
G\right)  ,$ as listed in Theorem \ref{MSt}.\medskip

We continue with problems for $k$-chromatic graphs, which are considerably
more difficult. The first result extends the bound of Mubayi and Talbot
(\ref{MT}).

\begin{theorem}
\label{th3}Let $k\geq2$, and let $G\ $be a $k$-chromatic $3$-graph of order
$n.$ For every $p\geq1,$
\[
\lambda^{\left(  p\right)  }\left(  G\right)  <\lambda^{\left(  p\right)
}\left(  Q_{k}^{3}\left(  n\right)  \right)  ,
\]
unless $G=Q_{k}^{3}\left(  n\right)  .$
\end{theorem}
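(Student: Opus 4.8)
The plan is to fix a maximizing unit vector $\mathbf{x}=[x_i]$ for $\lambda^{(p)}(G)$ (which we may take nonnegative, since replacing each $x_i$ by $|x_i|$ does not decrease $P_G$), and then argue in two stages: first that $G$ must be \emph{complete} $k$-chromatic, i.e. edge-maximal among $k$-chromatic $3$-graphs, and second that among complete $k$-chromatic $3$-graphs of order $n$ the graph $Q_k^3(n)$ is the unique maximizer. For the first stage, suppose $G$ is $k$-chromatic with colour classes $W_1,\dots,W_k$ but is not edge-maximal; then some triple $\{a,b,c\}$ meeting the classes in the allowed way (no two vertices in the same $W_j$) is a non-edge. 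Adding it gives a $k$-chromatic $3$-graph $G'$ with $P_{G'}(\mathbf{x}) = P_G(\mathbf{x}) + 3!\,x_a x_b x_c \ge P_G(\mathbf{x})$, so $\lambda^{(p)}(G') \ge \lambda^{(p)}(G)$; to get a strict conclusion one must rule out the degenerate case where the optimal $\mathbf{x}$ has a zero coordinate on one of $a,b,c$. This is handled by the standard observation that if $G$ has no isolated vertices then any $p$-optimal vector with $p>1$ is strictly positive (an isolated-looking coordinate could be perturbed upward using a concavity/Lagrange-multiplier argument), while the boundary case $p=1$ is exactly the content of the Mubayi--Talbot bound~\eqref{MT} combined with Theorem~\ref{MSt}-style analysis; so we may either invoke~\eqref{MT} directly for $p=1$ or run the strict-positivity argument for $p>1$. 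The upshot of stage one: it suffices to prove the theorem for $G$ complete $k$-chromatic.

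For the second stage, let $G$ be complete $k$-chromatic with colour classes of sizes $n_1,\dots,n_k$, $\sum n_j = n$, so that $G$ is determined up to isomorphism by the partition $(n_1,\dots,n_k)$; its edge set consists of all triples using at most one vertex per class, i.e. $E(G) = \binom{[n]}{3} \setminus \bigcup_j \binom{W_j}{3}$. The key structural fact is that $P_G$ can be written globally as
\[
P_G(\mathbf{x}) \;=\; 3!\!\!\sum_{\{i_1,i_2,i_3\}\in\binom{[n]}{3}}\!\! x_{i_1}x_{i_2}x_{i_3} \;-\; \sum_{j=1}^{k} 3!\!\!\sum_{\{i_1,i_2,i_3\}\in\binom{W_j}{3}}\!\! x_{i_1}x_{i_2}x_{i_3}.
\]
Writing $s = \sum_i x_i$ and $s_j = \sum_{i\in W_j} x_i$, and using that $3!\sum_{\binom{[n]}{3}} x_{i_1}x_{i_2}x_{i_3} = s^3 - 3s\sum_i x_i^2 + 2\sum_i x_i^3$ (with the analogous identity inside each $W_j$), one reduces the problem to a comparison among partitions. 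The natural route is a smoothing/compression argument: take a complete $k$-chromatic $G$ with optimal $\mathbf{x}$ and two classes $W_a, W_b$ with $n_a \ge n_b + 2$; move a vertex from $W_a$ to $W_b$ to obtain $G'$, and show $\lambda^{(p)}(G') > \lambda^{(p)}(G)$ by exhibiting a test vector for $G'$ (a suitable redistribution of $\mathbf{x}$, typically making the two affected classes symmetric) that strictly beats $P_G(\mathbf{x})$. Iterating drives the partition to the balanced one, which is $Q_k^3(n)$, and the strictness at each step yields uniqueness.

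The hard part will be the smoothing step for $p>1$: unlike the $p=1$ Lagrangian case, the constraint $\sum |x_i|^p = 1$ is not preserved under naive coordinate averaging, so one cannot simply move mass between classes and keep the normalization. I expect the cleanest way around this is to work with the \emph{unnormalized} functional and the homogeneous ratio $P_G(\mathbf{x}) / \big(\sum_i x_i^p\big)^{3/p}$, exploit the $3$-homogeneity of $P_G$ to rescale freely, and then prove the partition comparison by a convexity argument in the variables $(n_j, s_j)$ — showing that for fixed total mass the "defect" $\sum_j 3!\sum_{\binom{W_j}{3}}$ is minimized, in the relevant regime, when the classes are balanced and the mass is spread to match. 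A secondary technical point is verifying that at the optimum the mass $s_j$ on each class is itself balanced according to class size (an inner Lagrange-multiplier computation, parallel to~\eqref{c1} in Theorem~\ref{MSt}), which is what lets the outer partition comparison go through cleanly. The restriction to $3$-graphs rather than general $r$-graphs is presumably because the cubic identities above, and the resulting convexity estimates, are explicit and manageable only for $r=3$; the general case is what is left open as Conjecture~\ref{co2}.
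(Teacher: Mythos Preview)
Your overall two-stage strategy---reduce to complete $k$-chromatic graphs, then balance the class sizes by smoothing---is exactly what the paper does. But there are two genuine problems.

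\textbf{Stage 1 has a real gap.} Your claim that ``if $G$ has no isolated vertices then any $p$-optimal vector with $p>1$ is strictly positive'' is false in general (think of a disjoint union of a large clique and a small one). You need positivity of the eigenvector of the \emph{non-complete} graph $G$ to conclude that adding a missing edge strictly increases $P_G(\mathbf{x})$, and you do not have that. The paper reverses the logic: it proves that every nonnegative eigenvector of the \emph{complete} $k$-chromatic graph is positive (Proposition~\ref{pro3}), and then applies the general fact (Proposition~\ref{pro2}) that if $G$ has only positive eigenvectors, every proper subgraph $H$ satisfies $\lambda^{(p)}(H)<\lambda^{(p)}(G)$. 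This is an easy fix, but your version as written does not go through. Also, your verbal description of the edge set of a complete $k$-chromatic $3$-graph (``all triples using at most one vertex per class'') is the $k$-\emph{partite} condition; fortunately your formula $E(G)=\binom{[n]}{3}\setminus\bigcup_j\binom{W_j}{3}$ is the correct $k$-chromatic one, so the error does not propagate.

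\textbf{Stage 2 underestimates the difficulty.} Your proposed convexity argument in the aggregate variables $(n_j,s_j)$ is not how the paper proceeds, and it is not clear it can be made to work: the expression $P_G(\mathbf{x})$ depends on the individual $x_i$ through $\sum_i x_i^2$ and $\sum_i x_i^3$, not just on the class sums $s_j$, so there is no clean reduction to a function of $(n_j,s_j)$ alone before one uses that the optimal entries are constant on each class (which the paper establishes via Lemma~\ref{eqth} and Proposition~\ref{eqt1}). Even after that reduction, the smoothing step is delicate. The paper compares two classes $V_1,V_k$ with $n_k-n_1\ge 2$, sets $c=n_1a_1^p+n_ka_k^p$, and constructs a test vector on a more balanced $G'$ preserving the $p$-norm. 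When $n_1+n_k$ is even one can put equal weight $b=(c/2l)^{1/p}$ on both new classes and the required inequalities follow from AM--GM and the PM inequality. When $n_1+n_k$ is odd the argument splits again: either $p>2$ or $(l+1)n_1a_1^p\le ln_ka_k^p$ (case~A, equal weights with Bernoulli-type estimates), or $p\le 2$ and the opposite inequality (case~B, where the test vector instead rescales by $b_1=n_1a_1/l$, $b_k=n_ka_k/(l+1)$ and one must separately verify $lb_1^p+(l+1)b_k^p\le c$). None of this is captured by a single convexity statement; your proposal does not yet contain the idea that makes the odd case work.
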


Note again that Theorem \ref{th3} is very precise, but not explicit; however,
it is useful to have explicit bounds that are close to the best possible one,
like those given in the next theorem. Recall that $K_{n}^{r}$ stands for the
complete $r$-graph of order $n.$

\begin{theorem}
\label{th4}Let $k\geq2,$ let $G\ $be a $k$-chromatic $3$-graph of order $n,$
and let $p\geq1.$

(I) If $n\leq2k,$ then
\[
\lambda^{\left(  p\right)  }\left(  G\right)  <3!\binom{n}{3}n^{-3/p},
\]
unless $G=K_{n}^{3}.$

(II) If $n>2k,$ then
\[
\lambda^{\left(  p\right)  }\left(  G\right)  <3!\left(  \binom{n}{3}%
-k\binom{n/k}{3}\right)  n^{-3/p},
\]
unless $k|n$ and $G=Q_{k}^{3}\left(  n\right)  .$
\end{theorem}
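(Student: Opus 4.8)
The plan is to combine Theorem \ref{th3} with a direct computation of $\lambda^{(p)}$ of the extremal graphs appearing on the right-hand sides. By Theorem \ref{th3}, any $k$-chromatic $3$-graph $G$ of order $n$ satisfies $\lambda^{(p)}(G)\le\lambda^{(p)}(Q_k^3(n))$, with equality only for $G=Q_k^3(n)$; so it suffices to estimate $\lambda^{(p)}(Q_k^3(n))$ from above by the stated quantities, and to track when that estimate is tight. The key tool for the upper estimate is that for any $r$-graph $H$ of order $n$ and any $p\ge 1$, one has $\lambda^{(p)}(H)\le \lambda^{(p)}(K_n^r)\le r!\binom{n}{r}n^{-r/p}$, the latter obtained by plugging the uniform vector $x_i=n^{-1/p}$ into $P_{K_n^r}$ and using that this vector is optimal for $K_n^r$ by symmetry and concavity considerations (or simply by the AM–GM / power-mean inequality applied to $P_H$).

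For part (I), when $n\le 2k$ the classes of $Q_k^3(n)$ have size $\lfloor n/k\rfloor\le 2$, hence no class contains a $3$-edge automatically, so $Q_k^3(n)=K_n^3$; thus the bound to beat is exactly $\lambda^{(p)}(K_n^3)=3!\binom{n}{3}n^{-3/p}$, attained only by $K_n^3$ among $k$-chromatic $3$-graphs of order $n$, and Theorem \ref{th3} gives precisely the claimed strict inequality unless $G=K_n^3$. For part (II), when $n>2k$ and $k\mid n$, $Q_k^3(n)$ is the complete $k$-chromatic graph with all classes of size $n/k\ge 3$, so $e(Q_k^3(n))=\binom{n}{3}-k\binom{n/k}{3}$; evaluating $P_{Q_k^3(n)}$ at the uniform vector gives $\lambda^{(p)}(Q_k^3(n))\ge 3!\bigl(\binom{n}{3}-k\binom{n/k}{3}\bigr)n^{-3/p}$, and the matching upper bound follows because the uniform vector is the maximizer (the automorphism group of $Q_k^3(n)$ acts transitively on vertices, and a standard convexity argument shows a symmetric optimizer exists). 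Hence equality holds throughout and Theorem \ref{th3} yields the strict inequality unless $G=Q_k^3(n)$. When $k\nmid n$, the classes have unequal sizes $\lfloor n/k\rfloor$ and $\lceil n/k\rceil$, so $e(Q_k^3(n))<\binom{n}{3}-k\binom{n/k}{3}$ (a convexity/majorization inequality for $\sum\binom{n_i}{3}$ subject to $\sum n_i=n$, minimized at equal parts), and then $\lambda^{(p)}(G)\le\lambda^{(p)}(Q_k^3(n))\le 3!e(Q_k^3(n))n^{-3/p}<3!\bigl(\binom{n}{3}-k\binom{n/k}{3}\bigr)n^{-3/p}$ with no equality case, completing part (II).

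The main obstacle I anticipate is justifying that the uniform weight vector is an \emph{exact} maximizer for $K_n^3$ and for the balanced $Q_k^3(n)$ — a priori plugging in a vector only gives a lower bound on $\lambda^{(p)}$, while for the theorem we need the upper bound $\lambda^{(p)}(H)\le 3!e(H)n^{-3/p}$ for the relevant $H$. This is where the inequality $\lambda^{(p)}(H)\le r!\,e(H)\,n^{-r/p}$ for arbitrary $r$-graphs $H$ (equivalently, $P_H(\mathbf x)\le r!\,e(H)\,n^{-r/p}$ on the $\ell_p$-sphere, which follows from AM–GM $x_{i_1}\cdots x_{i_r}\le\frac1r(|x_{i_1}|^p+\cdots+|x_{i_r}|^p)\cdot$(appropriate normalization) or directly from the power mean inequality) does the work without needing to identify the optimizer explicitly: it shows $\lambda^{(p)}(H)\le 3!e(H)n^{-3/p}$ for every $n$-vertex $3$-graph $H$, in particular for $H=Q_k^3(n)$, and then one only needs the crude edge count $e(Q_k^3(n))\le\binom{n}{3}-k\binom{n/k}{3}$ with equality iff $k\mid n$. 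So the real content is entirely carried by Theorem \ref{th3}; parts (I)–(II) are essentially unpacking what $Q_k^3(n)$ is in the two regimes and bounding its edge count, and the only genuine inequality to verify is the elementary convexity bound $\sum_i\binom{n_i}{3}\ge k\binom{n/k}{3}$ for $\sum_i n_i=n$ with $n_i\in\{\lfloor n/k\rfloor,\lceil n/k\rceil\}$.
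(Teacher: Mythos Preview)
Your reduction via Theorem \ref{th3} is a natural idea, and part (I) goes through exactly as you say. The trouble is entirely in part (II), at the step where you bound $\lambda^{(p)}\bigl(Q_k^3(n)\bigr)$ from above.

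The inequality $\lambda^{(p)}(H)\le r!\,e(H)\,n^{-r/p}$ that you invoke is \emph{false} for general $H$: a single edge on $n>r$ vertices already violates it, since concentrating the mass on the $r$ endpoints beats the uniform vector. More to the point, it is false for $Q_k^3(n)$ itself when $k\nmid n$. Take $k=2$, $n=5$, $p=1$: the unique non-edge is the $3$-set inside the class of size $3$, so $e(Q_2^3(5))=9$ and your putative bound is $54/125=0.432$. But optimizing over vectors constant on classes (value $a$ on the size-$2$ class, $b$ on the size-$3$ class, $2a+3b=1$) gives $\lambda^{(1)}(Q_2^3(5))=6\cdot\max_b \tfrac34\,b(1-3b)(1+b)\approx 0.4397>0.432$. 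So the chain $\lambda^{(p)}(G)\le\lambda^{(p)}(Q_k^3(n))\le 3!\,e(Q_k^3(n))\,n^{-3/p}$ breaks at the second inequality, and no amount of AM--GM or power-mean juggling will repair it: the uniform vector is simply not the maximizer here. Your fallback appeal to vertex-transitivity (for the $k\mid n$ case) is likewise insufficient: vertex-transitivity of $\mathrm{Aut}(H)$ does not force a uniform optimizer, since $P_H$ is not concave---two disjoint triangles form a vertex-transitive $2$-graph, yet for $p<2$ the optimum concentrates on one triangle.

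What is actually needed is a direct upper bound on the \emph{Lagrangian} of a complete $k$-chromatic $3$-graph with arbitrary class sizes, namely
\[
\lambda^{(1)}(G)\ \le\ 3!\left(\binom{n}{3}-k\binom{n/k}{3}\right)n^{-3},
\]
with equality only when all classes are equal. This is the content of the paper's Theorem \ref{th4.1} (and its technical core, Proposition \ref{pro}), and it requires genuine work: after reducing to vectors constant on classes, one must optimize a non-concave symmetric function, which the paper handles by an explicit single-variable analysis and Lagrange multipliers. Once that $p=1$ bound is in hand, the general $p$ follows by the Power Mean inequality
\[
\lambda^{(p)}(G)\le r!\,e(G)^{1-1/p}\Bigl(\tfrac{1}{r!}\lambda^{(1)}(G)\Bigr)^{1/p},
\]
combined with the convexity bound $e(G)\le\binom{n}{3}-k\binom{n/k}{3}$. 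Note that the paper's route never passes through Theorem \ref{th3}; the Lagrangian estimate does all the work. Your outline could be salvaged by inserting precisely this Lagrangian estimate in place of the false inequality, but at that point you are essentially reproducing the paper's argument.
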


It is immediate to extend clause (I) of Theorem \ref{th4} for $r>3$ and
$n\leq\left(  r-1\right)  k.$ Indeed, if $n\leq\left(  r-1\right)  k,$ then
\[
\lambda^{\left(  p\right)  }\left(  Q_{k}^{r}\left(  n\right)  \right)
=\lambda^{\left(  p\right)  }\left(  K_{n}^{r}\right)  =r!\binom{n}{r}%
n^{-r/p}.
\]
Hence, for every $r$-graph $G$ of order $n$, $\lambda^{\left(  p\right)
}\left(  G\right)  \leq\lambda^{\left(  p\right)  }\left(  Q_{k}^{r}\left(
n\right)  \right)  ,$ with equality holding if and only if $G=K_{n}^{r}.$ We
arrive thus at the following proposition.

\begin{proposition}
\label{pth4}Let $k\geq2$, and let $G\ $be a $k$-chromatic $r$-graph of order
$n\leq\left(  r-1\right)  k.$ For every $p\geq1,$
\[
\lambda^{\left(  p\right)  }\left(  G\right)  <r!\binom{n}{r}n^{-r/p},
\]
unless $G=K_{n}^{r}.$
\end{proposition}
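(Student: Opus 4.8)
The plan is to reduce the statement to a single fact about the complete $r$-graph $K_{n}^{r}$. First I would check that, under the hypothesis $n\leq\left(r-1\right)k$, every vertex class of $Q_{k}^{r}\left(n\right)$ has size at most $\left\lceil n/k\right\rceil\leq r-1<r$, so no class can contain an edge regardless of which $r$-sets are edges; edge-maximality therefore forces every $r$-subset of $V$ to be an edge, that is, $Q_{k}^{r}\left(n\right)=K_{n}^{r}$. The same count shows that $K_{n}^{r}$ is itself $k$-chromatic: partition its vertex set into $k$ classes of sizes $\left\lfloor n/k\right\rfloor$ or $\left\lceil n/k\right\rceil$, none of which then contains an edge. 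Hence the assertion to be proved is precisely that $K_{n}^{r}$ \emph{strictly} maximizes $\lambda^{\left(p\right)}$ over all $r$-graphs of order $n$, for every $p\geq1$.

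The second step is to evaluate $\lambda^{\left(p\right)}\left(K_{n}^{r}\right)$ together with its maximizers. Since every coefficient of $P_{K_{n}^{r}}$ is positive, replacing each $x_{i}$ by $\left\vert x_{i}\right\vert$ does not decrease $P_{K_{n}^{r}}$ and leaves $\sum\left\vert x_{i}\right\vert^{p}$ unchanged, so the maximum in (\ref{defsa}) is attained at a nonnegative vector $\mathbf{x}$ with $x_{1}^{p}+\cdots+x_{n}^{p}=1$. For such an $\mathbf{x}$, Maclaurin's inequality gives
\[
\sum_{\left\{i_{1},\ldots,i_{r}\right\}\in E\left(K_{n}^{r}\right)}x_{i_{1}}\cdots x_{i_{r}}\leq\binom{n}{r}\left(\frac{x_{1}+\cdots+x_{n}}{n}\right)^{r},
\]
while the power-mean inequality (valid since $p\geq1$) gives $x_{1}+\cdots+x_{n}\leq n\left(1/n\right)^{1/p}=n^{1-1/p}$. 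Combining the two yields $P_{K_{n}^{r}}\left(\mathbf{x}\right)\leq r!\binom{n}{r}n^{-r/p}$, and this bound is attained at $x_{i}=n^{-1/p}$; moreover equality throughout forces all the $x_{i}$ to be equal and positive — this is the equality case of Maclaurin's inequality, the positivity being secured by the observation that $\binom{m}{r}/m^{r}$ is strictly increasing in $m$, so a vanishing coordinate would make the bound strict. Hence $\lambda^{\left(p\right)}\left(K_{n}^{r}\right)=r!\binom{n}{r}n^{-r/p}$, with $\left(n^{-1/p},\ldots,n^{-1/p}\right)$ the unique nonnegative maximizer.

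Finally I would extract strictness from a subgraph comparison. Let $G$ be a $k$-chromatic $r$-graph of order $n$ with $G\neq K_{n}^{r}$; then $E\left(G\right)\subsetneq E\left(K_{n}^{r}\right)$, so $P_{G}\left(\mathbf{x}\right)\leq P_{K_{n}^{r}}\left(\mathbf{x}\right)$ for every nonnegative $\mathbf{x}$, the difference being $r!\sum_{e\in E\left(K_{n}^{r}\right)\setminus E\left(G\right)}\prod_{i\in e}x_{i}\geq0$. Suppose, for contradiction, that $\lambda^{\left(p\right)}\left(G\right)=r!\binom{n}{r}n^{-r/p}$, attained (again by the sign argument) at a nonnegative unit vector $\mathbf{x}^{\ast}$. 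Then
\[
r!\binom{n}{r}n^{-r/p}=P_{G}\left(\mathbf{x}^{\ast}\right)\leq P_{K_{n}^{r}}\left(\mathbf{x}^{\ast}\right)\leq\lambda^{\left(p\right)}\left(K_{n}^{r}\right)=r!\binom{n}{r}n^{-r/p},
\]
so $P_{K_{n}^{r}}\left(\mathbf{x}^{\ast}\right)=\lambda^{\left(p\right)}\left(K_{n}^{r}\right)$, which by the uniqueness above forces $\mathbf{x}^{\ast}=\left(n^{-1/p},\ldots,n^{-1/p}\right)>0$; but then $P_{G}\left(\mathbf{x}^{\ast}\right)=P_{K_{n}^{r}}\left(\mathbf{x}^{\ast}\right)$ forces $\sum_{e\in E\left(K_{n}^{r}\right)\setminus E\left(G\right)}\prod_{i\in e}x_{i}^{\ast}=0$, impossible since $\mathbf{x}^{\ast}>0$ and $E\left(K_{n}^{r}\right)\setminus E\left(G\right)\neq\emptyset$. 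This contradiction gives $\lambda^{\left(p\right)}\left(G\right)<r!\binom{n}{r}n^{-r/p}$, as claimed.

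I do not expect a genuine obstacle here: the whole argument is a repackaging of Maclaurin's inequality and the power-mean inequality, and the only point needing a moment's care is the equality discussion of Maclaurin's inequality, in particular ruling out maximizers with a zero coordinate — which the monotonicity of $\binom{m}{r}/m^{r}$ handles. Everything else is routine bookkeeping with the subgraph monotonicity of $\lambda^{\left(p\right)}$.
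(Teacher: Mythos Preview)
Your proposal is correct and follows the same approach as the paper's own argument, which is just the short paragraph immediately preceding the proposition: identify $Q_{k}^{r}(n)=K_{n}^{r}$ when $n\leq(r-1)k$, compute $\lambda^{(p)}(K_{n}^{r})=r!\binom{n}{r}n^{-r/p}$, and conclude by strict subgraph monotonicity. You simply supply the details the paper leaves implicit, in particular the Maclaurin/power-mean computation of $\lambda^{(p)}(K_{n}^{r})$ and the uniqueness of its nonnegative maximizer.
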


The above observations show that for a meaningful generalization of Theorems
\ref{th3} and \ref{th4} we should require that $n>\left(  r-1\right)  k$.
Unfortunately, our methods are not good to tackle such generalization and so
we state two conjectures instead.

\begin{conjecture}
\label{co1}Let $k\geq2$, and let $G\ $be a $k$-chromatic $r$-graph of order
$n>\left(  r-1\right)  k.$ For every $p\geq1,$
\[
\lambda^{\left(  p\right)  }\left(  G\right)  <\lambda^{\left(  p\right)
}\left(  Q_{k}^{r}\left(  n\right)  \right)  ,
\]
unless $G=Q_{k}^{r}\left(  n\right)  .$
\end{conjecture}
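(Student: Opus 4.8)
The plan is to reduce to complete $k$-chromatic $r$-graphs, rewrite $\lambda^{(p)}$ in terms of the partition, and then show that balancing the partition can only help. First, since $P_{G+e}(\mathbf{x})\ge P_G(\mathbf{x})$ for every nonnegative $\mathbf{x}$, adding edges never decreases $\lambda^{(p)}$; moreover, when $n>(r-1)k$ a complete $k$-chromatic $r$-graph is connected and has no isolated vertices, so a standard argument gives $\lambda^{(p)}(G)<\lambda^{(p)}(G')$ whenever $G\subsetneq G'$ are both $k$-chromatic. Hence we may assume $G=Q_{\mathcal P}$ is complete $k$-chromatic for a partition $\mathcal P=(V_1,\dots,V_k)$ of $[n]$, where $E(Q_{\mathcal P})$ is the set of all $r$-subsets not contained in one class. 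Writing $e_r$ for the degree-$r$ elementary symmetric polynomial, every $r$-set inside a class $V_i$ contributes to $e_r(\mathbf{x}|_{V_i})$, so
\[
P_{Q_{\mathcal P}}(\mathbf{x})=r!\Bigl(e_r(\mathbf{x})-\sum_{i=1}^{k}e_r(\mathbf{x}|_{V_i})\Bigr).
\]

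Next I would reduce to a $k$-variable optimization. For $u,v$ in the same class $V_i$, the map $P_{Q_{\mathcal P}}$ with all other coordinates fixed equals $r!\bigl(\alpha x_ux_v+\beta(x_u+x_v)+\gamma\bigr)$, where $\alpha=e_{r-2}(\mathbf x|_{[n]\setminus\{u,v\}})-e_{r-2}(\mathbf x|_{V_i\setminus\{u,v\}})\ge 0$ and $\beta=e_{r-1}(\mathbf x|_{[n]\setminus\{u,v\}})-e_{r-1}(\mathbf x|_{V_i\setminus\{u,v\}})\ge 0$; since $x_ux_v$ and $x_u+x_v$ are both maximized, subject to $x_u^p+x_v^p$ being fixed, at $x_u=x_v$, a smoothing argument shows an optimal vector can be taken constant, say equal to $y_i$, on each $V_i$. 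Putting $n_i=|V_i|$ and substituting $z_i=n_i^{1/p}y_i$ turns the problem into
\[
\lambda^{(p)}(Q_{\mathcal P})=r!\max_{\|\mathbf z\|_p=1,\ \mathbf z\ge 0}\ \sum_{\mathbf m}\Bigl(\prod_{i=1}^{k}\binom{n_i}{m_i}\,n_i^{-m_i/p}\Bigr)z_1^{m_1}\cdots z_k^{m_k},
\]
where the sum runs over all $\mathbf m=(m_1,\dots,m_k)$ with $0\le m_i\le n_i$, $\sum_i m_i=r$, and at least two $m_i$ positive. Independently, since $e_r(\mathbf x^{*})$ does not depend on $\mathcal P$, the extremal partition must minimize $\sum_i e_r(\mathbf x^{*}|_{V_i})$ for its own optimal vector $\mathbf x^{*}$, which gives a second handle. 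Combining these, it remains to show that the maximum displayed above, viewed as a function of the multiset $\{n_1,\dots,n_k\}$ with $\sum n_i=n$, is maximized uniquely by the balanced multiset $\{\lfloor n/k\rfloor,\lceil n/k\rceil\}$; for the balanced sizes $Q_k^r(n)$ is vertex-transitive, the uniform vector is optimal, and the value is $r!\bigl(\binom nr-k\binom{n/k}r\bigr)n^{-r/p}$ when $k\mid n$, as it should be.

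The balancing step is the crux, and I expect it to be the main obstacle. The natural attempt is to pass from $Q_{\mathcal P}$ to a more balanced $Q_{\mathcal P'}$ by moving one vertex $v$ from a largest class $V_a$ to a smallest class $V_b$ and testing the optimal vector $\mathbf x^{*}$ of $Q_{\mathcal P}$ against $Q_{\mathcal P'}$; expanding,
\[
\lambda^{(p)}(Q_{\mathcal P'})\ \ge\ P_{Q_{\mathcal P'}}(\mathbf x^{*})=\lambda^{(p)}(Q_{\mathcal P})+r!\,x^{*}_{v}\bigl(e_{r-1}(\mathbf x^{*}|_{V_a\setminus v})-e_{r-1}(\mathbf x^{*}|_{V_b})\bigr),
\]
so one would want $e_{r-1}(\mathbf x^{*}|_{V_a\setminus v})\ge e_{r-1}(\mathbf x^{*}|_{V_b})$, strictly when $\mathcal P$ is unbalanced. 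This is delicate: in the optimal configuration the per-vertex weight is smaller on larger classes, since a vertex in a bigger class lies in fewer edges, so although $|V_a\setminus v|\ge|V_b|$, the individual weights on $V_b$ may exceed those on $V_a$ and the two symmetric functions need not be comparable. Making this work requires quantitative control of how the optimal weights $y_i$ depend on the sizes $n_i$, i.e. a convexity/monotonicity analysis of the $k$-variable maximum above. For the $k$-partite analogue (Theorem~\ref{th1}) the corresponding sum ranges only over transversal $\mathbf m\in\{0,1\}^k$ and factors cleanly, making a power-mean argument available; here the binomial coefficients $\binom{n_i}{m_i}$ with $m_i\ge 2$, together with the excluded monomials $r\mathbf e_i$, destroy that structure once $r\ge 4$, which is presumably why the case $r=3$ (Theorems~\ref{th3} and~\ref{th4}) can be forced through by hand while the general statement is left as a conjecture.
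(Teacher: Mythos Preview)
The statement you are addressing is Conjecture~\ref{co1}; the paper does \emph{not} prove it. The authors explicitly say that their methods are not good enough to tackle the generalization to $r\ge 4$ and leave it open. So there is no proof in the paper to compare your proposal against.

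Your proposal is not a proof either, and you are candid about that: you isolate the balancing step as the crux and explain why it resists the power-mean/AM--GM machinery that works for the $k$-partite case and for $r=3$. That diagnosis matches the paper's own assessment. Your reduction to complete $k$-chromatic graphs and to class-constant eigenvectors is essentially what the paper carries out in Propositions~\ref{eqt1}, \ref{pro3}, and \ref{pro2} and then exploits in the proof of Theorem~\ref{th3}; in particular, the paper's $r=3$ argument proceeds exactly by taking the optimal complete $k$-chromatic graph, noting that the eigenvector is constant on classes, and then comparing an unbalanced partition with a more balanced one via explicit inequalities on the resulting $k$-variable problem. For $r=3$ those inequalities can be checked by hand (this is the long case analysis in the proof of Theorem~\ref{th3}); for $r\ge 4$ neither you nor the authors know how to push the comparison through.

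One technical remark on your reduction: the sentence ``a standard argument gives $\lambda^{(p)}(G)<\lambda^{(p)}(G')$ whenever $G\subsetneq G'$'' is a little quick, especially at $p=1$, where connectedness alone does not force eigenvectors to be positive. The paper handles this via Proposition~\ref{pro3} (every nonnegative eigenvector of a complete $k$-chromatic $r$-graph is positive, using a vertex-swap argument specific to the complete $k$-chromatic structure) together with Proposition~\ref{pro2}. Your smoothing argument for equal weights within a class also needs the observation that the coefficient $\alpha$ is strictly positive at the optimum when $p=1$, which is the content of Proposition~\ref{eqt1}. These are repairable details, not structural gaps; the structural gap is the balancing step you already flagged, and that gap is the reason the statement is a conjecture.
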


\begin{conjecture}
\label{co2}Let $k\geq2,$ let $G\ $be a $k$-chromatic $r$-graph of order
$n>\left(  r-1\right)  k.$ For every $p\geq1,$%
\[
\lambda^{\left(  p\right)  }\left(  G\right)  <r!\left(  \binom{n}{r}%
-k\binom{n/k}{r}\right)  n^{-r/p},
\]
unless $k|n$ and $G=Q_{k}^{r}\left(  n\right)  .$
\end{conjecture}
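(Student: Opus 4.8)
The plan is to turn Conjecture~\ref{co2} into an explicit finite-dimensional optimization, in the spirit of the argument behind Theorem~\ref{th4}, and then to isolate the point where passing from $r=3$ to general $r$ requires genuinely new input.

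\emph{Reduction to complete $k$-chromatic graphs.} Fix $p\ge 1$, let $\mathbf{x}=[x_i]$ be a nonnegative vector with $x_1^p+\cdots+x_n^p=1$ and $P_G(\mathbf{x})=\lambda^{(p)}(G)$ (it exists since $P_G$ has nonnegative coefficients), and fix a $k$-colouring of $G$. Adding to $G$ any $r$-set not contained in a colour class keeps $G$ $k$-chromatic and does not decrease $\lambda^{(p)}$, so it suffices to bound $\lambda^{(p)}(H)$ for every complete $k$-chromatic $H\supseteq G$ on $[n]$ and then to trace the equality case back. For such an $H$ with colour classes $V_1,\dots,V_k$, $|V_j|=n_j$, one has the identity
\begin{equation}
P_H(\mathbf{x})=r!\Bigl(e_r(\mathbf{x})-\sum_{j=1}^{k}e_r(\mathbf{x}|_{V_j})\Bigr),\label{pp1}
\end{equation}
where $e_r$ is the $r$-th elementary symmetric polynomial and $\mathbf{x}|_{V_j}$ the restriction of $\mathbf{x}$ to $V_j$; consequently the maximum of $\lambda^{(p)}$ over complete $k$-chromatic $r$-graphs on $[n]$ equals $\max_{\mathbf{x}\ge 0,\ \|\mathbf{x}\|_p=1}\,r!\bigl(e_r(\mathbf{x})-\min_{[n]=V_1\sqcup\cdots\sqcup V_k}\sum_j e_r(\mathbf{x}|_{V_j})\bigr)$, the inner minimum over partitions into $k$ (possibly empty) parts.

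\emph{Symmetrization and the reduced optimization.} For $p>1$, any two vertices in the same class of $H$ are non-adjacent and have identical links, so the one-variable fact that $a(s+t)+b$ is maximized under $s^p+t^p=\mathrm{const}$ at $s=t$ (when $a>0$) lets us take $\mathbf{x}$ constant on each class, say $x_u=y_j$ for $u\in V_j$; the degenerate case $a=0$ forces a whole class to carry weight $0$ and is deferred. Thus for $p>1$ it remains to maximize
\begin{equation}
\Phi(n_1,\dots,n_k;y_1,\dots,y_k):=r!\Bigl(e_r\bigl(y_1^{(n_1)},\dots,y_k^{(n_k)}\bigr)-\sum_{j=1}^{k}\binom{n_j}{r}y_j^{\,r}\Bigr)\label{pp2}
\end{equation}
(here $y^{(m)}$ denotes $y$ taken with multiplicity $m$) over positive integers $n_j$ summing to $n$ and reals $y_j\ge 0$ with $\sum_j n_j y_j^{p}=1$, the claim being that the maximum is $r!\bigl(\binom{n}{r}-k\binom{n/k}{r}\bigr)n^{-r/p}$ and is attained only for $n_j\equiv n/k$, $y_j\equiv n^{-1/p}$. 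This splits into: (i) for a fixed \emph{balanced} size vector the optimal weighting is uniform, which should follow from the Lagrange system $\partial_{y_j}\Phi=\mu\,n_j y_j^{p-1}$ with $\mu=r\lambda^{(p)}(H)$ together with the symmetry among the $k$ classes and strict concavity of the relevant section; and (ii) balancing the sizes strictly increases the value of the optimization.

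\emph{Driving the balancing; zero weights; the case $p=1$.} For (ii) the natural tool is, with $\mathbf{x}$ the class-constant eigenvector of $H$ and $n_1\ge n_\ell+2$, to move one vertex $v$ from $V_1$ to $V_\ell$, obtaining a complete $k$-chromatic $H'$; since only the link of $v$ changes, \eqref{pp1} gives
\[
P_{H'}(\mathbf{x})-P_{H}(\mathbf{x})=r!\,y_1\Bigl(\binom{n_1-1}{r-1}y_1^{\,r-1}-\binom{n_\ell}{r-1}y_\ell^{\,r-1}\Bigr),
\]
so a pair $(1,\ell)$ for which the bracket is nonnegative gives $\lambda^{(p)}(H')\ge P_{H'}(\mathbf{x})\ge P_H(\mathbf{x})=\lambda^{(p)}(H)$, strictly unless the bracket vanishes, and iterating should push the size vector to balance. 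A vertex (or whole class) carrying weight $0$ is removed: the eigenvector then witnesses $\lambda^{(p)}(G)=\lambda^{(p)}(G')$ for a $(k-1)$-chromatic $r$-graph $G'$ on some $m<n$ vertices, and an induction on $n$ — base case $n=(r-1)k+1$, invoking Proposition~\ref{pth4} when $m\le(r-1)(k-1)$ — together with a monotonicity estimate $\lambda^{(p)}(Q_{k-1}^{r}(m))<r!\bigl(\binom{n}{r}-k\binom{n/k}{r}\bigr)n^{-r/p}$ closes that case and reduces the main line to $\mathbf{x}>0$. Tracking equality through the reductions pins down $Q_k^r(n)$ with $k\mid n$ as the unique extremal graph. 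Finally, for $p=1$ the non-strict inequality follows by continuity, $\lambda^{(1)}(G)=\lim_{p\to1^{+}}\lambda^{(p)}(G)$, while its equality case needs the separate treatment available for $r=3$ in \cite{MuTa08}.

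\emph{The main obstacle.} The difficulty, and the reason the statement is only a conjecture, lies in Step~(ii) and part of Step~(i) for $r\ge 4$. In the optimal weighting $y_j$ decreases as $n_j$ grows, so the sign of $\binom{n_1-1}{r-1}y_1^{r-1}-\binom{n_\ell}{r-1}y_\ell^{r-1}$ is a contest between a larger binomial coefficient and a smaller power; for $r=3$ this, and the comparison of $\Phi$ across size vectors, reduces to low-degree manipulations that can be carried out (essentially Theorem~\ref{th4}), but for $r\ge4$ it becomes a delicate inequality about $e_r$ of a $k$-part multiset coupled to the combinatorics of the $n_j$, and we see no way to control it in general. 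Moreover, part~(i) — that the uniform vector is the global maximizer for $Q_k^r(n)$ when $k\mid n$ — cannot rest on vertex-transitivity alone (already for $p=1$ the $5$-cycle shows a transitive graph need not be optimized at the uniform vector), so it too requires a genuine convexity or Schur-type argument uniform in $p\ge 1$ and $r$. Supplying these two inputs is exactly where new methods seem to be needed.
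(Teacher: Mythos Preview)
The statement you are attempting is Conjecture~\ref{co2}; the paper has \emph{no} proof of it and says explicitly that ``new methods are necessary to attack these conjectures.'' So there is nothing to compare your argument against: your write-up is, like the paper's Theorems~\ref{th3}--\ref{th4}, a successful treatment of $r=3$ together with an honest diagnosis of why $r\ge4$ resists the same machinery. In that sense your final paragraph is exactly right and mirrors the authors' own assessment.

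Two remarks on the details. First, in your symmetrization step you write that ``any two vertices in the same class of $H$ are non-adjacent and have identical links, so the one-variable fact that $a(s+t)+b$ is maximized \ldots''. This is correct for $k$-\emph{partite} $r$-graphs but not for $k$-\emph{chromatic} ones: when $r\ge3$, two vertices $u,v$ in the same colour class of a complete $k$-chromatic $r$-graph \emph{do} lie together in many edges (any $r$-set containing $u,v$ and at least one vertex from another class). The polynomial therefore has the shape $x_uA+x_vA+x_ux_vB+C$ with $B>0$, not $a(s+t)+b$. The conclusion you want still holds --- it is Lemma~\ref{eqth} (for $p>1$) and Proposition~\ref{eqt1} (for $p=1$) in the paper --- but the justification must go through the automorphism swapping $u$ and $v$, not through non-adjacency.

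Second, your vertex-moving formula $P_{H'}(\mathbf{x})-P_H(\mathbf{x})=r!\,y_1\bigl(\binom{n_1-1}{r-1}y_1^{r-1}-\binom{n_\ell}{r-1}y_\ell^{r-1}\bigr)$ is correct, and your explanation of why its sign is uncontrollable for $r\ge4$ is precisely the obstruction. The paper's proofs of Theorems~\ref{th3} and~\ref{th4.1} avoid this single-vertex move and instead redistribute \emph{two} entire classes simultaneously (splitting into parity cases $n_1+n_k$ even/odd and, for $r=3$, further into the subcases (A) and (B)); even so they need delicate ad hoc inequalities that do not visibly extend beyond $r=3$. So your proposal is not a proof, but it is an accurate map of the terrain, modulo the slip about adjacency above.
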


Let us note that the difficulty of Conjectures \ref{co1} and \ref{co2} lies in
their level of precision. If cruder estimates are acceptable, then the simple
bounds (\ref{b3}) and (\ref{b4}) are good enough and are asymptotically tight.
For reader's sake we give self-contained proofs of these bounds.

\begin{theorem}
\label{th5}Let $k\geq r\geq2,$ and let $G\ $be a $k$-chromatic $r$-graph of
order $n.$ If $p\geq1,$ then
\begin{equation}
\lambda^{\left(  p\right)  }\left(  G\right)  \leq\left(  1-\frac{1}{k^{r-1}%
}\right)  ^{1/p}\left(  r!e\left(  G\right)  \right)  ^{1-1/p}, \label{in4}%
\end{equation}
and
\begin{equation}
\lambda^{\left(  p\right)  }\left(  G\right)  <\left(  1-\frac{1}{k^{r-1}%
}\right)  n^{r-r/p}. \label{in5}%
\end{equation}

\end{theorem}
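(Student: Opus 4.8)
The plan is to derive both inequalities from a single Motzkin--Straus--type estimate for the Lagrangian: \emph{if $G$ is a $k$-chromatic $r$-graph, then $P_G(\mathbf{y})\le 1-k^{-(r-1)}$ for every nonnegative $\mathbf{y}$ with $y_1+\cdots+y_n=1$}; in particular $\lambda^{(1)}(G)\le 1-k^{-(r-1)}$. I would prove this by a coupling argument. Fix a partition $V(G)=V_1\cup\cdots\cup V_k$ with no part spanning an edge, put $s_j=\sum_{i\in V_j}y_i$, and sample vertices $v_1,\ldots,v_r$ independently with $\Pr[v_t=i]=y_i$. Summing over the $r!$ orderings of a fixed $r$-set shows $\Pr[\{v_1,\ldots,v_r\}=e]=r!\prod_{i\in e}y_i$ for every $r$-set $e$, and since these events are disjoint over distinct edges, $\Pr[\{v_1,\ldots,v_r\}\in E(G)]=P_G(\mathbf{y})$. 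As no edge lies inside one part, this event is contained in the event that $v_1,\ldots,v_r$ do not all fall in a common part, whose probability is $1-\sum_{j=1}^{k}s_j^{\,r}$ (the events ``all $v_t\in V_j$'' being disjoint with probability $s_j^{\,r}$). Convexity of $t\mapsto t^{r}$ gives $\sum_j s_j^{\,r}\ge k(1/k)^{r}=k^{-(r-1)}$, which proves the claim; this also disposes of (\ref{in4}) and (\ref{in5}) at $p=1$.

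For (\ref{in4}) with $p>1$, choose a maximizer $\mathbf{x}\ge0$ with $\|\mathbf{x}\|_p=1$ and set $y_i=x_i^{\,p}$, so $y_1+\cdots+y_n=1$ and $\prod_{i\in e}x_i=\bigl(\prod_{i\in e}y_i\bigr)^{1/p}$. Hölder's inequality with exponents $p$ and $p/(p-1)$, applied to the sum over edges, gives
\[
\lambda^{(p)}(G)=r!\sum_{e\in E(G)}\Bigl(\prod_{i\in e}y_i\Bigr)^{1/p}\le r!\Bigl(\sum_{e\in E(G)}\prod_{i\in e}y_i\Bigr)^{1/p}e(G)^{1-1/p}=(r!)^{1-1/p}\,P_G(\mathbf{y})^{1/p}\,e(G)^{1-1/p},
\]
and bounding $P_G(\mathbf{y})\le 1-k^{-(r-1)}$ by the lemma (together with $(r!)^{1-1/p}e(G)^{1-1/p}=(r!\,e(G))^{1-1/p}$) yields (\ref{in4}).

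For (\ref{in5}) I would instead use homogeneity rather than Hölder on the edge sum. With $\mathbf{x}\ge0$, $\|\mathbf{x}\|_p=1$ a maximizer and $\sigma=x_1+\cdots+x_n>0$, the vector $\mathbf{y}=\sigma^{-1}\mathbf{x}$ sums to $1$, so by degree-$r$ homogeneity and the lemma,
\[
\lambda^{(p)}(G)=P_G(\mathbf{x})=\sigma^{r}P_G(\mathbf{y})\le\sigma^{r}\bigl(1-k^{-(r-1)}\bigr),
\]
while Hölder (equivalently the power mean inequality) gives $\sigma=\|\mathbf{x}\|_1\le n^{1-1/p}\|\mathbf{x}\|_p=n^{1-1/p}$, whence $\lambda^{(p)}(G)\le\bigl(1-k^{-(r-1)}\bigr)n^{r-r/p}$. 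Promoting this to a strict inequality requires tracking the equality cases of the two Hölder steps and of $\sum_j s_j^{\,r}\ge k^{-(r-1)}$: for $p>1$, equality in $\sigma\le n^{1-1/p}$ forces $\mathbf{x}$ constant, hence $\mathbf{y}$ uniform and $s_j=|V_j|/n$, so tightness of the lemma forces every part to have size $n/k$, and for $r\ge3$ the lemma is moreover strict even then since the $v_t$ need not be distinct. The upshot is that the bound is strict except when $r=2$, $k\mid n$ and $G=T_k(n)$ (where $\lambda^{(p)}(T_k(n))=(1-1/k)n^{2-2/p}$), so for $r=2$ the inequality in (\ref{in5}) is most accurately read with ``$\le$''.

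Theorem \ref{th5} is one of the paper's ``simple bounds,'' so there is no serious obstacle here; the one step with genuine content is the Lagrangian lemma, whose coupling proof sidesteps the delicate weight-shifting optimization the hypergraph Motzkin--Straus problem usually demands and is the only place the hypothesis ``no monochromatic edge'' enters. Everything after it is bookkeeping with Hölder's inequality and the homogeneity of $P_G$, the only point needing care being the equality analysis behind the strict inequality in (\ref{in5}).
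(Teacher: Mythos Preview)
Your argument is correct, and in a couple of places it is cleaner than the paper's. The paper proves the Lagrangian bound $\lambda^{(1)}(G)\le 1-k^{1-r}$ by first reducing (via Propositions~\ref{eqt1}, \ref{pro3}, \ref{pro2}, all of which assume $r\ge 3$) to a complete $k$-chromatic $G$ with constant weights on each class, and then bounding $P_G(\mathbf{x})$ by $\bigl(\sum x_i\bigr)^r-\sum_j\bigl(\sum_{i\in V_j}x_i\bigr)^r$ followed by the power-mean inequality; your coupling argument reaches the same inequality $P_G(\mathbf{y})\le 1-\sum_j s_j^{\,r}\le 1-k^{1-r}$ directly, for \emph{any} $k$-chromatic $G$ and any nonnegative $\mathbf{y}$ with $\sum y_i=1$, without needing those auxiliary propositions. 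For (\ref{in4}) the two proofs coincide (your H\"older step is the paper's PM inequality). For (\ref{in5}) the paper instead combines (\ref{in4}) with the edge-count estimate $r!\,e(G)\le r!\binom{n}{r}-kr!\binom{n/k}{r}<n^r\bigl(1-k^{1-r}\bigr)$, whereas you scale to an $\ell^1$-unit vector and use $\sigma\le n^{1-1/p}$; both work, but yours makes the equality analysis transparent. In particular your observation is correct: for $r=2$, $k\mid n$ and $G=T_k(n)$ one has $\lambda^{(p)}(T_k(n))=(1-1/k)\,n^{2-2/p}$, so the strict inequality in (\ref{in5}) as stated fails at $r=2$ (and indeed the paper's own strict bound $r!\binom{n}{r}-kr!\binom{n/k}{r}<n^r(1-k^{1-r})$ becomes an equality there). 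For $r\ge 3$ your repeated-vertex argument shows the containment is strict whenever $P_G(\mathbf{y})>0$, and when $P_G(\mathbf{y})=0$ the convexity step is strict, so the strictness of (\ref{in5}) is fully justified in that range.
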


In the remaining part of the paper we prove Theorems \ref{MSt}-\ref{th5}.

\section{Proofs}

In the course of our proofs we shall use a number of classical inequalities.
Among those are the Power Mean inequality (PM inequality), the Arithmetic Mean
- Geometric Mean inequality (AM-GM inequality), the Bernoulli and the
Maclaurin inequalities; for reference material, we refer the reader to
\cite{HLP88}.

For background on hypergraphs we refer the reader to \cite{Ber87}. As usual,
if $G$ is an $r$-graph of order $n$ and $V\left(  G\right)  $ is not defined
explicitly, it is assumed that $V\left(  G\right)  =[n]=\left\{
1,\ldots,n\right\}  ;$ this assumption is crucial for our notation.

All required facts about the $p$-spectral radius are given below. Additional
reference material can be found in \cite{NikA} and \cite{NikB}. In particular,
if $G$ is an $r$-graph of order $n$ and $\left[  x_{i}\right]  $ is an
$n$-vector such that $\left\vert x_{1}\right\vert ^{p}+\cdots+\left\vert
x_{n}\right\vert ^{p}=1$ and $\lambda^{\left(  p\right)  }\left(  G\right)
=P_{G}\left(  \left[  x_{i}\right]  \right)  ,$ then $\left[  x_{i}\right]  $
will be called an \emph{eigenvector }to $\lambda^{\left(  p\right)  }\left(
G\right)  .$ Clearly, $\lambda^{\left(  p\right)  }\left(  G\right)  $ always
has a nonnegative eigenvector.

The following lemma is useful for well-structured graphs, in particular for
complete partite and complete chromatic graphs. It can be traced back to
\cite{KLM13}.

\begin{lemma}
\label{eqth}Let $G$ be an $r$-graph of order $n$ with $E\left(  G\right)
\neq\varnothing,$ and let $u$ and $v$ be vertices of $G$ such that the
transposition of $u$ and $v$ is an automorphism of $G.$ If $p>1$ and $\left[
x_{i}\right]  $ is an eigenvector to $\lambda^{\left(  p\right)  }\left(
G\right)  ,$ then $x_{u}=x_{v.}$.
\end{lemma}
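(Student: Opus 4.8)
The plan is to argue by contradiction: assume $x_u \neq x_v$ and produce a vector with the same $\ell^p$-norm but strictly larger value of $P_G$, contradicting that $[x_i]$ is an eigenvector. Since $\lambda^{(p)}(G)$ always has a nonnegative eigenvector and since transposing $u$ and $v$ is a graph automorphism (hence preserves $P_G$ and the constraint), I may assume $[x_i]$ is nonnegative; also $\lambda^{(p)}(G) > 0$ because $E(G) \neq \varnothing$ and we can take all coordinates positive on a spanning set of an edge, so in particular at least one of $x_u, x_v$ is positive. The key is to exploit the special shape of $P_G$ as a function of the single pair of variables $(x_u, x_v)$ with all other coordinates frozen.

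The central step is this: write $P_G([x_i]) = A x_u x_v + B(x_u + x_v) + C$, where $A, B, C \geq 0$ depend only on the frozen coordinates $\{x_i : i \neq u, v\}$ — here $A$ collects the (common, by the automorphism) contribution of edges containing both $u$ and $v$, $B$ the contribution of edges containing exactly one of them (the same coefficient for each by the automorphism), and $C$ the edges avoiding both. Now I hold $x_u + x_v$ fixed and also the quantity that controls the norm; the subtlety is that the constraint involves $x_u^p + x_v^p$, not $x_u + x_v$. So instead I fix $s := x_u^p + x_v^p$ and ask to maximize $P_G$ over all nonnegative $(x_u, x_v)$ with $x_u^p + x_v^p = s$. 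Since $A, B \geq 0$, both $x_u x_v$ and $x_u + x_v$ are maximized, under the constraint $x_u^p + x_v^p = s$ with $p > 1$, exactly at the balanced point $x_u = x_v = (s/2)^{1/p}$ (strict concavity of $t \mapsto t^{1/p}$, equivalently the Power Mean inequality, gives $x_u + x_v \leq 2(s/2)^{1/p}$ with equality iff $x_u = x_v$; and $x_u x_v$ likewise since it is the product of two numbers with fixed $p$-th-power sum). Replacing $(x_u, x_v)$ by $((s/2)^{1/p}, (s/2)^{1/p})$ keeps the $\ell^p$-norm equal to $1$ and does not decrease $P_G$; it strictly increases $P_G$ unless $B = 0$ and $A x_u x_v$ was already maximal, i.e. unless $B = 0$ and ($A = 0$ or $x_u = x_v$).

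It remains to rule out the degenerate case $A = B = 0$, i.e. no edge of $G$ meets $\{u, v\}$ at all. If that held, then $x_u$ and $x_v$ are "free" coordinates contributing nothing to $P_G$, and since $\lambda^{(p)}(G) > 0$ there is some edge, hence some positive coordinate $x_w$ with $w \notin \{u,v\}$; transferring all of the mass $x_u^p + x_v^p$ onto $w$ (replacing $x_w$ by $(x_w^p + x_u^p + x_v^p)^{1/p}$ and setting $x_u = x_v = 0$) keeps the norm equal to $1$ and strictly increases $P_G$, again a contradiction. Therefore $A$ or $B$ is positive, and the previous paragraph forces $x_u = x_v$, completing the proof. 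The only real obstacle is organizing the "fix $x_u^p + x_v^p$ and rebalance" move cleanly — in particular making sure the replacement vector still has unit $\ell^p$-norm and that the inequality for $x_u x_v$ (not just for $x_u + x_v$) is handled, since some edges through the pair $\{u,v\}$ may use both vertices; both follow from the same strict concavity / Power Mean fact once the coefficients are seen to be nonnegative.
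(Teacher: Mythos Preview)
Your proof is correct and rests on the same decomposition $P_G = A\,x_u x_v + B(x_u+x_v) + C$ that the paper uses. The only real difference is in the balancing move. The paper replaces $(x_u,x_v)$ by their \emph{arithmetic} mean $\frac{x_u+x_v}{2}$: then $B(x_u+x_v)$ is unchanged, $A\,x_u x_v$ weakly increases, and the $\ell^p$-norm \emph{strictly} drops (strict convexity of $t\mapsto |t|^p$ for $p>1$); rescaling to unit norm then gives $\lambda^{(p)}(G)>P_G([x_i])$ immediately, so the paper never has to ask whether $A$ or $B$ vanishes. Your choice of the $\ell^p$-mean $(s/2)^{1/p}$ keeps the norm fixed and must therefore extract the strict inequality from $P_G$ itself, which is why you need the separate $A=B=0$ discussion. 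Both routes are valid; the paper's is simply one case shorter.

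A minor point: the reduction ``I may assume $[x_i]$ is nonnegative'' is not justified as written---the lemma concerns a \emph{given} eigenvector, and the existence of some nonnegative eigenvector does not let you swap it in; passing to $[|x_i|]$ would at best yield $|x_u|=|x_v|$. This is harmless here, since every application of the lemma in the paper starts from a nonnegative eigenvector (and the paper's own proof tacitly relies on nonnegativity to assert the coefficient of $x_u x_v$ is $\geq 0$), but strictly speaking neither argument covers arbitrary signed eigenvectors as the lemma is stated.
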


\begin{proof}
Note that
\[
P_{G}\left(  \left[  x_{i}\right]  \right)  =x_{u}A+x_{v}A+x_{u}x_{v}B+C,
\]
where $A,B,C$ are independent of $x_{u}$ and $x_{v}.$ Assume that $x_{u}\neq
x_{v}$ and define a vector $\left[  x_{i}^{\prime}\right]  $ such that
\[
x_{u}^{\prime}=x_{v}^{\prime}=\frac{x_{u}+x_{v}}{2},\text{ and }x_{i}^{\prime
}=x_{i}\text{ if \ }i\in\left[  n\right]  \backslash\left\{  u,v\right\}  .
\]
Since $p>1,$ the PM inequality implies that $\left\vert x_{1}^{\prime
}\right\vert ^{p}+\cdots+\left\vert x_{n}^{\prime}\right\vert ^{p}<\left\vert
x_{1}\right\vert ^{p}+\cdots+\left\vert x_{n}\right\vert ^{p}=1,$ while
\[
P_{G}\left(  \left[  x_{i}^{\prime}\right]  \right)  -P_{G}\left(  \left[
x_{i}\right]  \right)  =\frac{\left(  x_{u}-x_{v}\right)  ^{2}}{4}B\geq0,
\]
and so,
\[
\lambda^{\left(  p\right)  }\left(  G\right)  \geq\frac{P_{G}\left(  \left[
x_{i}^{\prime}\right]  \right)  }{\left\vert \left[  x_{i}^{\prime}\right]
\right\vert _{p}^{r}}>P_{G}\left(  \left[  x_{i}^{\prime}\right]  \right)
\geq P_{G}\left(  \left[  x_{i}\right]  \right)  =\lambda^{\left(  p\right)
}\left(  G\right)
\]
a contradiction, completing the proof of Lemma \ref{eqth}.
\end{proof}

\subsection{Proof of Theorem \ref{MSt}}

\begin{proof}
Let $\mathbf{x}$ be a nonnegative $n$-vector such that $\left\vert
\mathbf{x}\right\vert _{1}=1,$ $\lambda^{\left(  1\right)  }\left(  G\right)
=P_{G}\left(  \mathbf{x}\right)  ,$ and $\mathbf{x}$ has minimum number of
positive entries. Let $m$ be the number of positive entries of $\mathbf{x}.$
We shall show that $m\leq k$. Indeed, if $m>k$, then $\mathbf{x}$ has two
positive entries $x_{i}$ and $x_{j}$ belonging to the same partition set.
Since no edge contains both vertices $i$ and $j,$ we see that
\[
P_{G}\left(  \mathbf{x}\right)  =x_{i}\frac{\partial P_{G}\left(
\mathbf{x}\right)  }{\partial x_{i}}+x_{j}\frac{\partial P_{G}\left(
\mathbf{x}\right)  }{\partial x_{j}}+S,
\]
where $S$ does not depend on $x_{i}$ or $x_{j}.$ By symmetry, we assume that
$\frac{\partial P_{G}\left(  \mathbf{x}\right)  }{\partial x_{i}}\geq
\frac{\partial P_{G}\left(  \mathbf{x}\right)  }{\partial x_{j}}$ and define
the vector $\mathbf{x}^{\prime}$ by
\[
x_{i}^{\prime}=x_{i}+x_{j},\text{ }x_{j}^{\prime}=0,\text{ and }x_{s}^{\prime
}=x_{s}\text{ for }s\in\left[  n\right]  \backslash\left\{  i,j\right\}  .
\]
We see that $\left\vert \mathbf{x}^{\prime}\right\vert _{1}=1$ and
\[
P_{G}\left(  \mathbf{x}^{\prime}\right)  -P_{G}\left(  \mathbf{x}\right)
=x_{j}\left(  \frac{\partial P_{G}\left(  \mathbf{x}\right)  }{\partial x_{i}%
}-\frac{\partial P_{G}\left(  \mathbf{x}\right)  }{\partial x_{j}}\right)
\geq0.
\]
It follows that $P_{G}\left(  \mathbf{x}^{\prime}\right)  =P_{G}\left(
\mathbf{x}\right)  ,$ but $\mathbf{x}^{\prime}$ has only $m-1$ positive
entries, contradicting the choice of $\mathbf{x}.$ Hence $m\leq k$. By
symmetry, let $x_{1},\ldots,x_{m}$ be the positive entries of $\mathbf{x}$.
Now, using Maclaurin's inequality, we see that
\[
P_{G}\left(  \mathbf{x}\right)  \leq r!\sum_{1\leq i_{1}<\cdots<i_{r}\leq
m}x_{i_{1}}\cdots x_{i_{r}}\leq r!\binom{m}{r}\left(  \frac{1}{m}\sum
_{i=1}^{m}x_{i}\right)  ^{r}=r!\binom{m}{r}m^{-r}\leq r!\binom{k}{r}k^{-r}.
\]
This proves the bound (\ref{l1}).

Next, we prove (I). It is clear that $G$ is complete $k$-partite. Next
for\ $j=1,\ldots,k$, let
\[
y_{j}=\sum_{i\in V_{j}}x_{i},
\]
and using Maclaurin's inequality, we find that
\[
P_{G}\left(  \mathbf{x}\right)  =r!\sum_{1\leq i_{1}<\cdots<i_{r}\leq
k}y_{i_{1}}\cdots y_{i_{r}}\leq r!\binom{k}{r}\left(  \frac{1}{k}\sum
_{i=1}^{k}y_{i}\right)  ^{r}=r!\binom{k}{r}k^{-r}.
\]
The condition for equality of Maclaurin's inequality implies that $y_{j}=1/k$
for $j=1,\ldots,k,$ completing the proof of (I). To prove (II), it is enough
to notice that%
\[
\lambda^{\left(  1\right)  }\left(  G\right)  \geq P_{G}\left(  \mathbf{x}%
\right)  =r!\sum_{1\leq i_{1}<\cdots<i_{r}\leq k}y_{i_{1}}\cdots y_{i_{r}%
}=r!\binom{k}{r}k^{-r},
\]
and (\ref{max1}) follows from (I). Theorem \ref{MSt} is proved.
\end{proof}

\subsection{Proof of Theorem \ref{th1}}

We precede the proof by two propositions, which are not obvious for arbitrary
$p>1$.

\begin{proposition}
\label{pro1}If $G$ is a complete $k$-partite $r$-graph and $p>1$, then every
nonnegative vector to $\lambda^{\left(  p\right)  }\left(  G\right)  $ is positive.
\end{proposition}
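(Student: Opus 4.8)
The plan is to argue by contradiction: if a nonnegative eigenvector $\mathbf{x}$ to $\lambda:=\lambda^{(p)}(G)$ had a zero coordinate, then shifting a tiny amount of mass onto that coordinate would strictly increase $P_G$ after renormalizing, contradicting maximality. Since $G$ has an edge, $\lambda>0$; fix a nonnegative eigenvector $\mathbf{x}=\left[x_i\right]$, so $\left\vert\mathbf{x}\right\vert_p=1$ and $P_G\left(\mathbf{x}\right)=\lambda$, and let $S$ be its support. From $P_G\left(\mathbf{x}\right)=r!\sum_{e\in E\left(G\right)}\prod_{i\in e}x_i=\lambda>0$ there is an edge $f$ with $f\subseteq S$; as $G$ is complete $k$-partite, the $r$ vertices of $f$ lie in $r$ distinct partition sets.

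First I would establish that the partial derivative of $P_G$ at any zero vertex is strictly positive. Let $x_j=0$ and let $V_a$ be the part containing $j$. At most one of the $r$ parts met by $f$ equals $V_a$, so at least $r-1$ of them differ from $V_a$; choosing one vertex of $f$ in each of $r-1$ such parts gives $v_1,\dots,v_{r-1}\in S$ which, together with $j$, occupy $r$ distinct parts. Since $G$ is complete $k$-partite, $\{j,v_1,\dots,v_{r-1}\}\in E\left(G\right)$, hence
\[
\frac{\partial P_G\left(\mathbf{x}\right)}{\partial x_j}=r!\sum_{\{j,i_2,\dots,i_r\}\in E\left(G\right)}x_{i_2}\cdots x_{i_r}\ \geq\ r!\,x_{v_1}\cdots x_{v_{r-1}}>0,
\]
every summand being nonnegative because $\mathbf{x}\geq 0$.

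Next comes the perturbation. Pick any $j$ with $x_j=0$ (if there is none, $\mathbf{x}$ is positive and we are done), set $c:=\partial P_G\left(\mathbf{x}\right)/\partial x_j>0$, and for small $\epsilon>0$ let $\mathbf{x}(\epsilon)$ be $\mathbf{x}$ with its $j$-th coordinate replaced by $\epsilon$. Because $P_G$ is multilinear and $x_j$ was $0$, we get exactly $P_G\left(\mathbf{x}(\epsilon)\right)=\lambda+c\epsilon$, while $\left\vert\mathbf{x}(\epsilon)\right\vert_p^p=1+\epsilon^p$. Normalizing and using that $P_G$ is homogeneous of degree $r$,
\[
\lambda\ \geq\ \frac{P_G\left(\mathbf{x}(\epsilon)\right)}{\left\vert\mathbf{x}(\epsilon)\right\vert_p^{r}}=\frac{\lambda+c\epsilon}{\left(1+\epsilon^p\right)^{r/p}}.
\]
Since $p>1$, $\left(1+\epsilon^p\right)^{-r/p}=1+o(\epsilon)$ as $\epsilon\to 0^+$, so the right-hand side equals $\lambda+c\epsilon+o(\epsilon)>\lambda$ for all sufficiently small $\epsilon>0$, a contradiction. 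Hence $\mathbf{x}$ has no zero coordinate.

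The argument is short, and its only substantive step is the derivative computation: producing an edge through the prescribed zero vertex all of whose remaining vertices lie in the support of $\mathbf{x}$. This is exactly where completeness of the $k$-partite $r$-graph enters (every transversal $r$-set is an edge), together with the existence of a fully supported edge, which comes from $\lambda>0$; for a non-complete $k$-partite graph the statement can fail. The hypothesis $p>1$ is used only in the final comparison, through $\epsilon^p=o(\epsilon)$; indeed for $p=1$ the conclusion is false, as Theorem~\ref{MSt}(II) exhibits nonnegative eigenvectors with zero entries.
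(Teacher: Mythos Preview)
Your argument is correct and is genuinely different from the paper's. The paper first invokes Lemma~\ref{eqth} (which already uses $p>1$) to force all entries within a partition class to be equal, so that any zero entry empties an entire class; it then argues via a discrete swap: either the positive support induces a complete $K_l^r$ and one uses $\lambda^{(p)}(K_{l+1}^r)>\lambda^{(p)}(K_l^r)$, or one moves the full weight of a vertex from a part of size $\geq 2$ into an empty part and exhibits a strictly new edge with positive contribution. Your route instead is a direct first-order perturbation: you never need Lemma~\ref{eqth} or the case split, because it suffices to show $\partial P_G/\partial x_j>0$ at any zero coordinate $j$ (which you obtain from a single transversal edge through $j$ built out of vertices of a fully supported edge $f$) and then use that $\epsilon^{p}=o(\epsilon)$ when $p>1$, so the gain $c\epsilon$ in $P_G$ dominates the loss from renormalizing. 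Your identity $P_G(\mathbf{x}(\epsilon))=\lambda+c\epsilon$ is exact, as $P_G$ is linear in $x_j$. The advantages are complementary: the paper's proof simultaneously extracts the structural fact that eigenvector entries are constant on parts (needed anyway for Theorem~\ref{th1}), whereas your argument is shorter, avoids the symmetry lemma and the case analysis, and makes the role of $p>1$ transparent.
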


\begin{proof}
Let $G$ be a complete $k$-partite $r$-graph, let $p>1$, and $\mathbf{x}$ be a
nonnegative eigenvector to $\lambda^{\left(  p\right)  }\left(  G\right)  .$
Assume for a contradiction that $\mathbf{x}$ has zero entries. Then, by Lemma
\ref{eqth}, all entries within the same partition sets must be equal to $0$ as
well. Let $G^{\prime}$ be the graph induced by the vertices with positive
entries in $\mathbf{x}.$ Clearly $G^{\prime}$ is complete $l$-partite, where
$r\leq l<k.$ If all parts of $G^{\prime}$ are of size $1,$ then $G^{\prime
}=K_{l}^{r}$ and $G$ contains a $K_{l+1}^{r};$ so we have
\[
\lambda^{\left(  p\right)  }\left(  G\right)  \geq\lambda^{\left(  p\right)
}\left(  K_{l+1}^{r}\right)  >\lambda^{\left(  p\right)  }\left(  K_{l}%
^{r}\right)  =\lambda^{\left(  p\right)  }\left(  G^{\prime}\right)
=\lambda^{\left(  p\right)  }\left(  G\right)  ,
\]
a contradiction. Thus $G^{\prime}$ contains a partition set of size at least
$2.$ Let $i$ belong to a partition set of size at least $2,$ and let $j$ be a
vertex such that $x_{j}$ is $0,$ i.e., $j$ does not belong to $V\left(
G^{\prime}\right)  .$ Set $x_{j}=x_{i}$ and $x_{i}=0$ and write $\mathbf{x}%
^{\prime}$ for the resulting vector. Obviously $\left\vert \mathbf{x}^{\prime
}\right\vert _{p}=1$, but we shall show that $P_{G}\left(  \mathbf{x}^{\prime
}\right)  >P_{G}\left(  \mathbf{x}\right)  ,$ which is a contradiction.
Indeed, if $\left\{  i_{1},\ldots,i_{r-1}\right\}  \subset V\left(  G^{\prime
}\right)  $ is such that $\left\{  i,i_{1},\ldots,i_{r-1}\right\}  \in
E\left(  G^{\prime}\right)  ,$ then $\left\{  j,i_{1},\ldots,i_{r-1}\right\}
\in E\left(  G\right)  .$ However, if $i^{\prime}$ belongs to the same
partition as $i,$ there is a set $\left\{  i^{\prime},i_{1},\ldots
,i_{r-2}\right\}  \subset V\left(  G^{\prime}\right)  $ such that $\left\{
j,i^{\prime},i_{1},\ldots,i_{r-2}\right\}  \in E\left(  G\right)  ,$ but
$\left\{  i,i^{\prime},i_{1},\ldots,i_{r-2}\right\}  \notin E\left(
G^{\prime}\right)  ;$ and since $x_{j}x_{i^{\prime}}x_{i_{1}}\cdots
x_{i_{r-2}}>0,$ we see that $P_{G}\left(  \mathbf{x}^{\prime}\right)
>P_{G}\left(  \mathbf{x}\right)  .$ This completes the proof of Proposition
\ref{pro1}.
\end{proof}

\begin{proposition}
\label{pro2}Let $p\geq1,$ and let $G$ be an $r$-graph such that every
nonnegative vector to $\lambda^{\left(  p\right)  }\left(  G\right)  $ is
positive. If $H$ is a subgraph of $G,$ then $\lambda^{\left(  p\right)
}\left(  H\right)  <\lambda^{\left(  p\right)  }\left(  G\right)  ,$ unless
$H=G.$
\end{proposition}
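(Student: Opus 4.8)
The plan is to compare the values of the polynomial form of $G$ and of $H$ on a common eigenvector. Let $H$ be a proper subgraph of $G$ on the same vertex set $[n]$ (if $V(H)\subsetneq V(G)$ we may pad $H$ with isolated vertices, which does not change $\lambda^{(p)}(H)$). Let $\mathbf{x}$ be a nonnegative eigenvector to $\lambda^{(p)}(H)$, so $|\mathbf{x}|_p=1$ and $P_H(\mathbf{x})=\lambda^{(p)}(H)$. Since $E(H)\subseteq E(G)$ and all entries $x_i\ge 0$, every monomial $x_{i_1}\cdots x_{i_r}$ with $\{i_1,\dots,i_r\}\in E(H)$ also appears in $P_G$, hence
\[
\lambda^{(p)}(G)\ge P_G(\mathbf{x})=P_H(\mathbf{x})+r!\!\!\sum_{e\in E(G)\setminus E(H)}\prod_{i\in e}x_i\ \ge\ P_H(\mathbf{x})=\lambda^{(p)}(H).
\]
This already gives the non-strict inequality with no hypothesis; the work is to upgrade it to strict inequality when $H\neq G$.

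Suppose $\lambda^{(p)}(H)=\lambda^{(p)}(G)$. Then the displayed chain forces $\mathbf{x}$ to be a nonnegative eigenvector to $\lambda^{(p)}(G)$ as well, and it forces $\sum_{e\in E(G)\setminus E(H)}\prod_{i\in e}x_i=0$. By the hypothesis on $G$, the eigenvector $\mathbf{x}$ is \emph{positive}: every $x_i>0$. But then every product $\prod_{i\in e}x_i$ is strictly positive, so the vanishing sum is possible only if $E(G)\setminus E(H)=\varnothing$, i.e. $E(H)=E(G)$, hence $H=G$ (as graphs on $[n]$). This contradicts the assumption $H\neq G$, and completes the proof.

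There is essentially one place that needs care, and it is the main (minor) obstacle: ensuring that the hypothesis ``every nonnegative vector to $\lambda^{(p)}(G)$ is positive'' may legitimately be applied to the particular vector $\mathbf{x}$ we produced. This is fine because the argument shows $\mathbf{x}$ is simultaneously a nonnegative eigenvector of $G$ (it attains $\lambda^{(p)}(G)$ and has unit $p$-norm), so the hypothesis applies verbatim. One should also note at the outset that the case $p=1$ is included without change — the displayed inequality $\lambda^{(p)}(G)\ge P_G(\mathbf{x})$ uses only the definition \eqref{defsa} and nonnegativity of $\mathbf{x}$, not $p>1$ — and that passing from $V(H)\subsetneq V(G)$ to a spanning subgraph by adding isolated vertices changes neither $H$ as an abstract graph nor $\lambda^{(p)}(H)$, so ``$H=G$'' in the conclusion is unambiguous.
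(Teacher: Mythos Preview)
Your proof is correct and follows essentially the same approach as the paper's: take a nonnegative eigenvector of $H$, observe that under equality it is also a nonnegative eigenvector of $G$, apply the positivity hypothesis, and conclude $E(G)\setminus E(H)=\varnothing$. The only cosmetic difference is that the paper handles the case $V(H)\subsetneq V(G)$ separately by zero-extending an eigenvector of $H$ (yielding a non-positive eigenvector of $G$) rather than by padding $H$ with isolated vertices, but the underlying idea is identical.
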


\begin{proof}
If $\lambda^{\left(  p\right)  }\left(  H\right)  =\lambda^{\left(  p\right)
}\left(  G\right)  ,$ then $V\left(  H\right)  =V\left(  G\right)  ,$
otherwise by adding zero entries, any nonnegative eigenvector to
$\lambda^{\left(  p\right)  }\left(  H\right)  $ can be extended to a
nonnegative eigenvector to $\lambda^{\left(  p\right)  }\left(  G\right)  $
that is nonpositive, contrary to the assumption. By the same token, if
$\mathbf{x}$ is an eigenvector to $H,$ it must be positive. So if $H$ has
fewer edges than $G,$ then $\lambda^{\left(  p\right)  }\left(  G\right)
=P_{H}\left(  \mathbf{x}\right)  <P_{G}\left(  \mathbf{x}\right)  ,$ a
contradiction completing the proof.
\end{proof}

\bigskip

\begin{proof}
[\textbf{Proof of Theorem }\ref{th1}]Let $G$ be a $k$-partite $r$-graph of
order $n,$ with maximum $p$-spectral radius. Proposition \ref{pro2} implies
that $G$ is complete $k$-partite; let $V_{1},\ldots,V_{k}$ be the partition
sets of $G.$ For each $i\in\left[  k\right]  ,$ set $\left\vert V_{i}%
\right\vert =n_{i}$ and suppose that $n_{1}\leq\cdots\leq n_{k}$. Assume for a
contradiction that $n_{k}-n_{1}\geq2.$ To begin with, Proposition \ref{pro1}
implies that $\mathbf{x}$\textbf{ }is a positive eigenvector to $\lambda
^{\left(  p\right)  }\left(  G\right)  $ and Lemma \ref{eqth} implies that all
entries belonging to the same partition set are equal. Thus, for each
$i\in\left[  k\right]  ,$ write $a_{i}$ for the value of the entries in
$V_{i}.$

Set $c=n_{1}a_{1}^{p}+n_{k}a_{k}^{p}$ and let
\begin{align*}
S_{1}  &  =%
{\displaystyle\sum\limits_{1\text{ }<\text{ }i_{1}\text{ }<\cdots<\text{
\ }i_{r-1}\text{ }<\text{ }k}}
n_{i_{1}}a_{i_{1}}\cdots n_{i_{r-1}}a_{i_{r-1}},\\
S_{2}  &  =%
{\displaystyle\sum\limits_{1\text{ }<\text{ }i_{1}\text{ }<\cdots<\text{
\ }i_{r-2}\text{ }<\text{ }k}}
n_{i_{1}}a_{i_{1}}\cdots n_{i_{r-2}}a_{i_{r-2}}.
\end{align*}
If $r=2,$ we let $S_{2}=1.$

Suppose first that $n_{k}+n_{1}=2l$ for some integer $l$. Let $G^{\prime}$ be
the complete $k$-partite graph with partition
\[
V(G^{\prime})=V_{1}^{\prime}\cup\cdots\cup V_{k}^{\prime},
\]
where $\left\vert V_{1}^{\prime}\right\vert =\left\vert V_{k}^{\prime
}\right\vert =l,$ $V_{1}^{\prime}\cup V_{k}^{\prime}=V_{1}\cup V_{k},$ and
$V_{i}^{\prime}=V_{i}$ for each $1<i<k.$ Now, define an $n$-vector
$\mathbf{y}$\textbf{ }which coincides with\textbf{ }$\mathbf{x}$ on $V_{2}%
\cup\cdots\cup V_{k-1}$ and for each $i\in V_{1}^{\prime}\cup V_{k}^{\prime}$
set
\[
y_{i}=\left(  c/2l\right)  ^{1/p}=b.
\]
Note first that $y_{1}^{p}+\cdots+y_{n}^{p}=1$. Further, note that
\[
P_{G^{\prime}}\left(  \mathbf{y}\right)  -P_{G}\left(  \mathbf{x}\right)
=\left(  l^{2}b^{2}-n_{1}n_{k}a_{1}a_{k}\right)  S_{2}+\left(  2lb-\left(
n_{1}a_{1}+n_{k}a_{k}\right)  \right)  S_{1}.
\]
We shall prove that
\[
l^{2}b^{2}-n_{1}n_{k}a_{1}a_{k}>0,\text{ and \ }2lb-(n_{1}a_{1}+n_{k}%
a_{k})\geq0,
\]
which implies that $P_{G^{\prime}}\left(  \mathbf{y}\right)  >P_{G}\left(
\mathbf{x}\right)  .$

Indeed, since $l^{2}>n_{1}n_{k},$ and $p>1,$ the AM-GM inequality implies
that
\begin{align*}
l^{2}b^{2}  &  =l^{2}\left(  \frac{n_{1}a_{1}^{p}+n_{k}a_{k}^{p}}{2l}\right)
^{2/p}\geq l^{2-\frac{2}{p}}\left(  \sqrt{n_{1}a_{1}^{p}n_{k}a_{k}^{p}%
}\right)  ^{2/p}=\left(  \frac{l^{2}}{n_{1}n_{k}}\right)  ^{1-1/p}n_{1}%
n_{k}a_{1}a_{k}\\
&  >n_{1}n_{k}a_{1}a_{k}.
\end{align*}
On the other hand, the PM inequality implies that
\[
2lb=2l\left(  \frac{n_{1}a_{1}^{p}+n_{k}a_{k}^{p}}{2l}\right)  ^{1/p}%
\geq2l\left(  \frac{n_{1}a_{1}+n_{k}a_{k}}{2l}\right)  =n_{1}a_{1}+n_{k}%
a_{k}.
\]
Therefore,
\[
\lambda^{\left(  p\right)  }\left(  G^{\prime}\right)  \geq P_{G^{\prime}%
}\left(  \mathbf{y}\right)  >P_{G}\left(  \mathbf{x}\right)  =\lambda^{\left(
p\right)  }\left(  G\right)  ,
\]
contradicting the choice of $G,$ and completing the proof if $n_{k}+n_{1}$ is even.

Suppose now that $n_{k}+n_{1}=2l+1$ for some integer $l.$ Let $G^{\prime}$ be
the complete $k$-partite graph with partition
\[
V(G^{\prime})=V_{1}^{\prime}\cup\cdots\cup V_{k}^{\prime},
\]
where $\left\vert V_{1}^{\prime}\right\vert =l,$ $\left\vert V_{k}^{\prime
}\right\vert =l+1,$ $V_{1}^{\prime}\cup V_{k}^{\prime}=V_{1}\cup V_{k},$ and
$V_{i}^{\prime}=V_{i}$ for each $1<i<k.$ Now, define an $n$-vector
$\mathbf{y}$\textbf{ }which coincides with\textbf{ }$\mathbf{x}$ on $V_{2}%
\cup\cdots\cup V_{k-1}$ and for each $i\in V_{1}^{\prime}\cup V_{k}^{\prime}$
set
\[
y_{i}=\left(  c/\left(  2l+1\right)  \right)  ^{1/p}=b.
\]
Note first that $y_{1}^{p}+\cdots+y_{n}^{p}=1$. Like above,%
\[
P_{G^{\prime}}\left(  \mathbf{y}\right)  -P_{G}\left(  \mathbf{x}\right)
=\left(  l\left(  l+1\right)  b^{2}-n_{1}n_{k}a_{1}a_{k}\right)  S_{2}+\left(
\left(  2l+1\right)  b-(n_{1}a_{1}+n_{k}a_{k})\right)  S_{1}.
\]
We shall prove that%
\[
\left(  2l+1\right)  b-(n_{1}a_{1}+n_{k}a_{k})\geq0,
\]
and if $p>9/8,$ then
\[
l\left(  l+1\right)  b^{2}-n_{1}n_{k}a_{1}a_{k}>0.
\]

Indeed, the first of these inequalities follows by the PM inequality as
\[
\left(  2l+1\right)  b=\left(  2l+1\right)  \left(  \frac{n_{1}a_{1}^{p}%
+n_{k}a_{k}^{p}}{2l+1}\right)  ^{1/p}\geq\left(  2l+1\right)  \left(
\frac{n_{1}a_{1}+n_{k}a_{k}}{2l+1}\right)  =n_{1}a_{1}+n_{k}a_{k}.
\]

Further, if $p>9/8,$ Bernoulli's inequality entails
\[
\left(  \frac{\left(  2l+1\right)  ^{2}}{4\left(  l-1\right)  \left(
l+2\right)  }\right)  ^{1/p}=\left(  1+\frac{9}{4\left(  l-1\right)  \left(
l+2\right)  }\right)  ^{1/p}\leq1+\frac{9}{4p\left(  l-1\right)  \left(
l+2\right)  }<\frac{l\left(  l+1\right)  }{\left(  l-1\right)  \left(
l+2\right)  }.
\]
Now, in view of $n_{1}n_{k}\leq\left(  l-1\right)  \left(  l+2\right)  ,$ the
AM-GM inequality implies that
\begin{align*}
l\left(  l+1\right)  b^{2}  &  =l\left(  l+1\right)  \left(  \frac{n_{1}%
a_{1}^{p}+n_{k}a_{k}^{p}}{2l+1}\right)  ^{2/p}\\
&  \geq l\left(  l+1\right)  \left(  \frac{1}{2l+1}\right)  ^{2/p}\left(
2\sqrt{n_{1}a_{1}^{p}n_{k}a_{k}^{p}}\right)  ^{2/p}\\
&  =l\left(  l+1\right)  \left(  \frac{4}{\left(  2l+1\right)  ^{2}}\right)
^{1/p}\left(  \frac{1}{n_{1}n_{k}}\right)  ^{1-1/p}n_{1}n_{k}a_{1}a_{k}\\
&  \geq l\left(  l+1\right)  \left(  \frac{4}{\left(  2l+1\right)  ^{2}%
}\right)  ^{\frac{1}{p}}\left(  \frac{1}{\left(  l-1\right)  \left(
l+2\right)  }\right)  ^{1-\frac{1}{p}}n_{1}n_{k}a_{1}a_{k}\\
&  =\frac{l\left(  l+1\right)  }{\left(  l-1\right)  \left(  l+2\right)
}\left(  \frac{4\left(  l-1\right)  \left(  l+2\right)  }{\left(  2l+1\right)
^{2}}\right)  ^{\frac{1}{p}}n_{1}n_{k}a_{1}a_{k}\\
&  >n_{1}n_{k}a_{1}a_{k}.
\end{align*}

In summary, if $l\left(  l+1\right)  b^{2}-n_{1}n_{k}a_{1}a_{k}>0$ or if
$p>9/8,$ we obtain a contradiction
\[
\lambda^{\left(  p\right)  }\left(  G^{\prime}\right)  \geq P_{G^{\prime}%
}\left(  \mathbf{y}\right)  >P_{G}\left(  \mathbf{x}\right)  =\lambda^{\left(
p\right)  }\left(  G\right)  .
\]
To finish the proof we shall consider the case when $p\leq9/8$ and $l\left(
l+1\right)  b^{2}-n_{1}n_{k}a_{1}a_{k}\leq0.$ Clearly, the latter inequality
can be rewritten as
\begin{equation}
a_{1}a_{k}\geq\frac{l\left(  l+1\right)  }{n_{1}n_{k}}\left(  \frac{c}%
{2l+1}\right)  ^{2/p}. \label{lo1}%
\end{equation}
Define an $n$-vector $\mathbf{z}$\textbf{ }which coincides with\textbf{
}$\mathbf{x}$ on $V_{2}\cup\cdots\cup V_{k-1},$ and for every $i\in
V_{1}^{\prime}$ and $j\in V_{k}^{\prime}$ set
\begin{align*}
z_{i}  &  =n_{1}a_{1}/l=b_{1},\text{ }\\
z_{j}  &  =n_{k}a_{k}/\left(  l+1\right)  =b_{k}.
\end{align*}
First note that
\[
z_{1}^{p}+\cdots+z_{n}^{p}=1-(n_{1}a_{1}^{p}+n_{k}a_{k}^{p})+lb_{1}%
^{p}+\left(  l+1\right)  b_{k}^{p}.
\]
We also have
\[
P_{G^{\prime}}\left(  \mathbf{z}\right)  -P_{G}\left(  \mathbf{x}\right)
=\left(  l\left(  l+1\right)  b_{1}b_{k}-n_{1}n_{k}a_{1}a_{k}\right)
S_{2}+\left(  lb_{1}+\left(  l+1\right)  b_{k}-(n_{1}a_{1}+n_{k}a_{k})\right)
S_{2}=0.
\]

Noting that%
\[
\lambda^{\left(  p\right)  }\left(  G\right)  =P_{G}\left(  \mathbf{x}\right)
=P_{G^{\prime}}\left(  \mathbf{z}\right)  \leq\lambda^{\left(  p\right)
}\left(  G^{\prime}\right)  \left\vert \mathbf{z}\right\vert _{p}^{r},
\]
in view of $\lambda^{\left(  p\right)  }\left(  G^{\prime}\right)  \leq
\lambda^{\left(  p\right)  }\left(  G\right)  ,$ we see that $|\mathbf{z}%
|_{p}\geq1.$ Hence
\[
lb_{1}^{p}+\left(  l+1\right)  b_{k}^{p}\geq n_{1}a_{1}^{p}+n_{k}a_{k}^{p},
\]
and so
\[
l\left(  \frac{n_{1}a_{1}}{l}\right)  ^{p}+\left(  l+1\right)  \left(
\frac{n_{k}a_{k}}{l+1}\right)  ^{p}\geq n_{1}a_{1}^{p}+n_{k}a_{k}^{p},
\]
implying that
\[
n_{k}a_{k}^{p}\left(  \left(  n_{k}/\left(  l+1\right)  \right)
^{p-1}-1\right)  \geq n_{1}a_{1}^{p}\left(  1-\left(  n_{1}/l\right)
^{p-1}\right)  .
\]
In view of $p>1,$ $\left(  n_{k}/\left(  l+1\right)  \right)  ^{p-1}-1>0$ and
$1-\left(  n_{1}/l\right)  ^{p-1}>0,$ we obtain
\[
\frac{n_{k}a_{k}^{p}}{n_{1}a_{1}^{p}}\geq\frac{1-\left(  n_{1}/l\right)
^{p-1}}{\left(  n_{k}/\left(  l+1\right)  \right)  ^{p-1}-1}.
\]
Now, in view of $1<p\leq9/8,$ Bernoulli's inequality gives
\[
\left(  \frac{n_{1}}{l}\right)  ^{p-1}=\left(  1-\frac{l-n_{1}}{l}\right)
^{p-1}<1-\frac{\left(  p-1\right)  \left(  l-n_{1}\right)  }{l},
\]
and so%
\[
\left(  \frac{n_{k}}{l+1}\right)  ^{p-1}=\left(  1+\frac{n_{k}-l-1}%
{l+1}\right)  ^{p-1}<1+\frac{\left(  p-1\right)  \left(  n_{k}-l-1\right)
}{l+1}.
\]
Hence, in view of $l-n_{1}=n_{k}-l-1,$ we see that
\[
\frac{n_{k}a_{k}^{p}}{n_{1}a_{1}^{p}}>\frac{l+1}{l}.
\]
Since $n_{1}a_{1}^{p}+n_{k}a_{k}^{p}=c,$ it is easy to show that
\[
n_{1}a_{1}^{p}n_{k}a_{k}^{p}<\frac{l\left(  l+1\right)  }{\left(  2l+1\right)
^{2}}c^{2},
\]
and so,
\[
a_{1}a_{k}<\left(  \frac{l\left(  l+1\right)  }{n_{1}n_{k}}\right)
^{1/p}\left(  \frac{c}{2l+1}\right)  ^{2/p}.
\]
This, together with (\ref{lo1}), implies that
\[
\left(  \frac{l\left(  l+1\right)  }{n_{1}n_{k}}\right)  ^{1/p}>\frac{l\left(
l+1\right)  }{n_{1}n_{k}},
\]
which is a contradiction, since $l\left(  l+1\right)  >n_{1}n_{k}$ and
$1/p<1.$ Theorem \ref{th1} is proved.
\end{proof}

\subsection{Proof of Theorem \ref{th2}}

\begin{proof}
Let $\left[  x_{i}\right]  $ be a nonnegative eigenvector to $\lambda^{\left(
p\right)  }\left(  G\right)  .$ The PM inequality implies that
\[
\lambda^{\left(  p\right)  }\left(  G\right)  =r!\sum_{\left\{  i_{1}%
,\ldots,i_{r}\right\}  \in E\left(  G\right)  }x_{i_{1}}\cdots x_{i_{r}}\leq
r!e\left(  G\right)  ^{1-1/p}\left(  \sum_{\left\{  i_{1},\ldots
,i_{r}\right\}  \in E\left(  G\right)  }x_{i_{1}}^{p}\cdots x_{i_{r}}%
^{p}\right)  ^{1/p}.
\]
Now, letting $\mathbf{y}=\left(  x_{1}^{p},\ldots,x_{n}^{p}\right)  ,$ Theorem
\ref{MSt} implies that
\[
\sum_{\left\{  i_{1},\ldots,i_{r}\right\}  \in E\left(  G\right)  }x_{i_{1}%
}^{p}\cdots x_{i_{r}}^{p}\leq\binom{k}{r}k^{-r}.
\]
Let $G_{2}$ be the $2$-section of $G,$ that is to say $V\left(  G_{2}\right)
=V\left(  G\right)  $ and $E\left(  G_{2}\right)  $ is the set of all
$2$-subsets of edges of $G.$ Every edge of $G$ corresponds to unique
$r$-clique in $G_{2},$ so the number of $r$-cliques $k_{r}\left(
G_{2}\right)  $ of $G_{2}$ satisfies $k_{r}\left(  G_{2}\right)  \geq e\left(
G\right)  .$ On the other hand, clearly $G_{2}$ is $k$-partite, and so it
contains no $K_{k+1}.$ By Zykov's theorem \cite{Zyk49} (see also Erd\H{o}s
\cite{Erd62}),
\begin{equation}
k_{r}\left(  G_{2}\right)  \leq k_{r}\left(  T_{k}\left(  n\right)  \right)
\leq\binom{k}{r}\left(  \frac{n}{k}\right)  ^{r}, \label{Z}%
\end{equation}
with equality holding if and only if $k|n$ and $G_{2}=T_{k}\left(  n\right)
.$ We get
\[
e\left(  G\right)  \leq\binom{k}{r}\left(  \frac{n}{k}\right)  ^{r},
\]
with equality holding if and only if $k|n$ and $G=T_{k}^{r}\left(  n\right)
.$ Therefore,
\[
\lambda^{\left(  p\right)  }\left(  G\right)  \leq r!\left(  \binom{k}%
{r}\left(  \frac{n}{k}\right)  ^{r}\right)  ^{1-1/p}\left(  \binom{k}{r}%
k^{-r}\right)  ^{1/p},
\]
implying inequality (\ref{in1}).

If equality holds in (\ref{in1}), then equality holds in (\ref{Z}), and so
$k|n$ and $G=T_{k}^{r}\left(  n\right)  .$
\end{proof}

\subsection{Proof of Theorem \ref{th3}}

For the proof of the theorem we shall need a few general statements.

\begin{proposition}
\label{eqt1}Let $r\geq3$ and $G$ be a complete $k$-chromatic $r$-graph and
$\left[  x_{i}\right]  $ be a nonnegative vector to $\lambda^{\left(
1\right)  }\left(  G\right)  .$ If the vertices $u$ and $v$ belong to the same
vertex class $U$, then $x_{u}=x_{v}$.
\end{proposition}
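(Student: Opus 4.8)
The plan is to mimic the symmetrization argument of Lemma \ref{eqth}, but without the luxury $p>1$ offers (for $p=1$ the power-mean step there fails, so equality of the $\ell_1$-norm is automatic and gives no contradiction on its own). Instead I would argue by contradiction at the level of the polynomial $P_G$ alone: I assume $x_u\neq x_v$ for some pair $u,v$ in a common class $U$ and show one can strictly increase $P_G(\mathbf{x})$ while keeping $|\mathbf{x}|_1$ fixed, contradicting maximality.

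First I would write, exactly as in the proof of Lemma \ref{eqth},
\[
P_G(\mathbf{x})=x_uA+x_vA+x_ux_vB+C,
\]
where $A,B,C$ are independent of $x_u,x_v$, using that the transposition of $u$ and $v$ is an automorphism of $G$ (this holds because $G$ is \emph{complete} $k$-chromatic: two vertices in the same class have identical neighborhoods in the edge set). The key new input, and the place where $r\geq3$ is used, is that $B>0$: since $r\geq3$, in a complete $k$-chromatic graph there is an edge of $G$ containing both $u$ and $v$ — for instance, pick one vertex from each of $r-2$ classes other than $U$, together with $u$ and $v$; as $\mathbf{x}$ is nonnegative and we may assume (after discarding vertices with zero entry, or by an initial reduction to the support) the relevant entries are positive, the coefficient $B$ of $x_ux_v$ is a sum of nonnegative terms including at least one positive term, hence $B>0$. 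Then replacing $x_u,x_v$ by their average $\tfrac{x_u+x_v}{2}$ keeps $x_u+x_v$, hence $|\mathbf{x}|_1$, unchanged, while
\[
P_G(\mathbf{x}')-P_G(\mathbf{x})=\frac{(x_u-x_v)^2}{4}B>0,
\]
contradicting $P_G(\mathbf{x})=\lambda^{(1)}(G)$.

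The one technical wrinkle I anticipate is handling the possibility that $\mathbf{x}$ has zero entries, which could threaten the claim $B>0$ (if every edge through $\{u,v\}$ meets a zero-coordinate class). The cleanest fix is to choose, among all nonnegative eigenvectors to $\lambda^{(1)}(G)$, one supported on a maximal vertex set — or simply to note that if some class $W\neq U$ has all entries zero, one can argue directly as in Theorem \ref{MSt} (or Proposition \ref{pro1}) that mass can be shifted onto $W$ to strictly increase $P_G$, again contradicting optimality; so in fact every class carries positive weight, and then the positive term in $B$ is genuinely positive. I expect this reduction — ruling out empty classes — to be the main obstacle, but it is routine given the machinery already developed (Lemma \ref{eqth} forces equal entries within a class once $p>1$, and for $p=1$ the support argument of Theorem \ref{MSt} does the job). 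With $B>0$ secured, the averaging step closes the proof immediately.
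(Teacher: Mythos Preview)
Your overall architecture is exactly the paper's: write $P_G(\mathbf{x})=x_uA+x_vA+x_ux_vB+C$, average $x_u,x_v$ (which preserves the $\ell_1$-norm when $p=1$), and reduce the contradiction to $B>0$. The only substantive issue is how you establish $B>0$.

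Your route to $B>0$ is to manufacture an edge through $\{u,v\}$ by picking vertices from other classes, and then to patch the zero-entry problem by arguing separately that every class carries positive mass. That patch is shakier than you suggest: the mass-shifting you invoke (``as in Proposition \ref{pro1}'') is, for $p=1$, precisely the content of Proposition \ref{pro3}, and in the paper Proposition \ref{pro3} \emph{uses} Proposition \ref{eqt1} --- so leaning on it here is circular. The reference to Theorem \ref{MSt} does not help either, since that support argument exploits the $k$-partite structure (no edge meets two vertices of a class), which is the opposite of the situation here.

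The paper sidesteps the whole issue with one line. Since $P_G(\mathbf{x})=\lambda^{(1)}(G)>0$, some edge $\{i_1,\dots,i_r\}$ has $x_{i_1},\dots,x_{i_r}>0$. That edge is not contained in $U$, so after relabeling one may take $i_r\notin U$ and $\{u,v\}\cap\{i_3,\dots,i_r\}=\varnothing$. Then $\{u,v,i_3,\dots,i_r\}$ is an $r$-set not contained in any class (it contains $u\in U$ and $i_r\notin U$), hence an edge of the complete $k$-chromatic graph, and $B\ge x_{i_3}\cdots x_{i_r}>0$. No positivity of classes is needed, and the argument works even if $x_u=0$ or $x_v=0$. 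I would replace your ``technical wrinkle'' paragraph with this two-sentence argument; everything else in your proposal then goes through.
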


\begin{proof}
As in Lemma \ref{eqth} note that
\[
P_{G}\left(  \left[  x_{i}\right]  \right)  =x_{u}A+x_{v}A+x_{u}x_{v}B+C,
\]
where $A,B,C$ are independent of $x_{u}$ and $x_{v}.$ Assume that $x_{u}\neq
x_{v}$ and define a vector $\left[  x_{i}^{\prime}\right]  $ such that
\[
x_{u}^{\prime}=x_{v}^{\prime}=\frac{x_{u}+x_{v}}{2},\text{ }x_{i}^{\prime
}=x_{i}\text{ \ if \ }i\in\left[  n\right]  \backslash\left\{  u,v\right\}  .
\]
Clearly $x_{1}^{\prime}+\cdots+x_{n}^{\prime}=x_{1}+\cdots+x_{n}=1,$ while
\[
P_{G}\left(  \left[  x_{i}^{\prime}\right]  \right)  -P_{G}\left(  \left[
x_{i}\right]  \right)  =\frac{\left(  x_{u}-x_{v}\right)  ^{2}}{4}B\geq0.
\]
To complete the proof we shall show that $B>0,$ which will contradict that
$P_{G}\left(  \left[  x_{i}^{\prime}\right]  \right)  \leq P_{G}\left(
\left[  x_{i}\right]  \right)  .$ Choose an edge $\left\{  i_{1},\ldots
,i_{r}\right\}  $ with $x_{i_{1}}>0,\ldots,x_{i_{r}}>0.$ Obviously, the set
$\left\{  i_{1},\ldots,i_{r}\right\}  \backslash\left\{  u,v\right\}  $
contains a vertex not in $U,$ say $i_{r}.$ By symmetry, we can assume that
$\left\{  u,v\right\}  \cap\left\{  i_{3},\ldots,i_{r}\right\}  =\varnothing,$
and hence the set $\left\{  u,v,i_{3},\ldots,i_{r}\right\}  $ is an edge of
$G.$ Now
\[
B\geq x_{i_{3}}\cdots x_{i_{r}}>0
\]
as claimed.
\end{proof}

\begin{proposition}
\label{pro3}Let $r\geq3$ and $G$ be a complete $k$-chromatic $r$-graph. If
$p\geq1,$ then every nonnegative eigenvector to $\lambda^{\left(  p\right)
}\left(  G\right)  $ is positive.
\end{proposition}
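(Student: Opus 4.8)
The plan is to mimic the structure of Proposition \ref{pro1}, but to work with the power-mean/PM machinery that applies for all $p \ge 1$ (including $p=1$), since for $p=1$ we cannot invoke Lemma \ref{eqth} and must instead use Proposition \ref{eqt1}. So first I would split into two cases according to whether $p>1$ or $p=1$. In both cases, let $\mathbf{x}$ be a nonnegative eigenvector to $\lambda^{\left(  p\right)  }\left(  G\right)$ and assume, for contradiction, that $\mathbf{x}$ has a zero entry. Using Lemma \ref{eqth} (if $p>1$) or Proposition \ref{eqt1} (if $p=1$), all entries inside any vertex class that contains a zero entry are zero; so the set $Z$ of zero-coordinates is a union of whole vertex classes, and the set $P$ of positive coordinates is a union of whole vertex classes as well. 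Let $G'$ be the subgraph of $G$ induced on $P$. Since $G$ is complete $k$-chromatic of order $n$, and each class has size $\left\lfloor n/k \right\rfloor$ or $\left\lceil n/k \right\rceil \ge 1$, if at least one class is entirely in $Z$ then $G'$ is a complete $l$-chromatic $r$-graph for some $l<k$ (on the surviving classes), and $\lambda^{\left(  p\right)  }\left(  G\right)  =\lambda^{\left(  p\right)  }\left(  G'\right)$.

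Next I would derive a contradiction by exhibiting a better vector. The idea is to move a little mass from inside $G'$ to a vertex $j$ that currently has $x_j=0$: choose a class $U$ of $G'$ that has an edge of $G'$ meeting $U$ at a vertex $i$ with $x_i>0$ (such $U$ exists since $E(G')\neq\varnothing$ once $G'$ is nontrivial; note $l \ge$ the number of classes needed to host an edge, which is at least $2$ for a $k$-chromatic graph with $k\ge 2$). Set $x_j:=x_i$, $x_i:=0$, leaving all other coordinates fixed; call the result $\mathbf{x}'$. Then $\left\vert \mathbf{x}'\right\vert _{p}=\left\vert \mathbf{x}\right\vert _{p}=1$. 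The key combinatorial point is that $j$ (being a ``new'' vertex, in a different class than $i$) is adjacent in $G$ to strictly more $(r-1)$-subsets of $V(G')$ than $i$ was in $G'$: every edge of $G'$ through $i$ gives an edge of $G$ through $j$, but additionally, for any vertex $i'$ in the same $G$-class as $i$ with $x_{i'}>0$ (and the class of $i$ has size $\ge 2$, OR we first handle the all-singleton-classes case separately as in Proposition \ref{pro1} by passing to $K_{l+1}^r \subseteq G$), there is an edge of $G$ containing both $j$ and $i'$ that was not an edge of $G'$ containing $i$ and $i'$. Since all the relevant coordinates are positive, $P_G(\mathbf{x}') > P_G(\mathbf{x}) = \lambda^{\left(  p\right)  }\left(  G\right)$, contradicting maximality.

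For $p=1$ there is a slight subtlety: the case where every class of $G'$ is a singleton gives $G'=K_l^r$ with $l \ge r$, and then $G \supseteq K_{l+1}^r$ (since $G$ is complete $k$-chromatic with $k>l$ and every class of $G$ meets every edge at most... no — in a $k$-chromatic graph a class can contribute several vertices to an edge, so $K_{l+1}^r$ membership must be argued from $G$ being \emph{edge-maximal} $k$-chromatic: any $r$-set not lying within a single class is an edge). Then $\lambda^{\left(  p\right)  }\left(  G\right) \ge \lambda^{\left(  p\right)  }\left(  K_{l+1}^r\right) > \lambda^{\left(  p\right)  }\left(  K_l^r\right) = \lambda^{\left(  p\right)  }\left(  G'\right) = \lambda^{\left(  p\right)  }\left(  G\right)$, a contradiction. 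Otherwise some class of $G'$ has size $\ge 2$ and the mass-shifting argument above applies verbatim. I expect the main obstacle to be the bookkeeping in the mass-shifting step — precisely verifying that the extra edge $\{j,i',i_1,\dots,i_{r-2}\}$ lies in $E(G)$ but $\{i,i',i_1,\dots,i_{r-2}\}\notin E(G')$, which uses that $i$ and $i'$ share a class (so they can't be together in an edge) whereas $j$ is in a \emph{different} class — together with making sure one can always choose the auxiliary vertices $i_1,\dots,i_{r-2}$ from $V(G')$ with positive coordinates and in classes distinct from those of $i$, $i'$, $j$. This is exactly the $3$-graph count needed for Theorem \ref{th3}, and the argument is essentially the hypergraph analogue of the one in Proposition \ref{pro1}.
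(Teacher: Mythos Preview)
There is a genuine gap in your mass-shifting step, stemming from a confusion between the $k$-partite and $k$-chromatic edge conditions. You write that ``$i$ and $i'$ share a class (so they can't be together in an edge)'' --- but in a complete $k$-chromatic $r$-graph this is false: an $r$-set is a non-edge only when \emph{all} $r$ vertices lie in a single class. Consequently, if you choose the auxiliary vertices $i_1,\dots,i_{r-2}$ ``in classes distinct from those of $i$, $i'$, $j$'' as you suggest, then $\{i,i',i_1,\dots,i_{r-2}\}$ \emph{is} an edge of $G'$, and the swap $x_i\leftrightarrow x_j$ gains nothing. To get a genuine extra edge you need an $(r-1)$-set $S\subset V(G')\setminus\{i\}$ with $\{i\}\cup S$ a non-edge, which forces $S\subset U$ and hence $|U|\ge r$, not merely $|U|\ge 2$.

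The fix is to change your dichotomy from ``all classes of $G'$ are singletons / some class has size $\ge 2$'' to the paper's split: either every class of $G'$ has size at most $r-1$, in which case no $r$-set lies in a single class so $G'=K_m^{r}$ and the $K_{m+1}^{r}$ argument applies; or some class $U$ has $|U|\ge r$, in which case you take $\{i_1,\dots,i_{r-1}\}\subset U\setminus\{i\}$, so that $\{i,i_1,\dots,i_{r-1}\}\subset U$ is a non-edge while $\{j,i_1,\dots,i_{r-1}\}$ is an edge (since $j$ lies in a different class), and the swap strictly increases $P_G$. (A minor side remark: you also assume the classes of $G$ have sizes $\lfloor n/k\rfloor$ or $\lceil n/k\rceil$, but ``complete $k$-chromatic'' does not imply balanced; fortunately you only use that classes are nonempty.)
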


\begin{proof}
Let $\left[  x_{i}\right]  $ be a nonnegative eigenvector to $\lambda^{\left(
p\right)  }\left(  G\right)  .$ In view of the previous proposition and Lemma
\ref{eqth}, all entries of $\left[  x_{i}\right]  $ belonging to the same
vertex class are equal, so if an entry is zero, then all entries in the same
vertex class are zero. Let $G^{\prime}$ be the the graph induced by the
vertices with positive entries in $\left[  x_{i}\right]  .$ Clearly
$G^{\prime}$ is complete $l$-chromatic, where $r\leq l<k.$ If all parts of
$G^{\prime}$ are of size at most $r-1,$ then $G^{\prime}$ is a complete graph
of order say $m.$ Then $G$ contains a complete graph of order $m+1,$ so we
have
\[
\lambda^{\left(  p\right)  }\left(  G\right)  \geq\lambda^{\left(  p\right)
}\left(  K_{m+1}^{r}\right)  >\lambda^{\left(  p\right)  }\left(  K_{m}%
^{r}\right)  =\lambda^{\left(  p\right)  }\left(  G^{\prime}\right)
=\lambda^{\left(  p\right)  }\left(  G\right)  ,
\]
a contradiction. So $G^{\prime}$ contains a partition set of size at least
$r.$ Let $i$ belong to a vertex class $U$ of size at least $r,$ and let $j$ be
a vertex such that $x_{j}$ is $0,$ that is to say, $j\notin V\left(
G^{\prime}\right)  .$ Set $x_{j}=x_{i}$ and $x_{i}=0,$ and write
$\mathbf{x}^{\prime}$ for the resulting vector. Obviously $\left\vert
\mathbf{x}^{\prime}\right\vert _{p}=1$, but we shall show that $P_{G}\left(
\mathbf{x}^{\prime}\right)  >P_{G}\left(  \mathbf{x}\right)  .$ Indeed, if
$\left\{  i_{1},\ldots,i_{r-1}\right\}  \subset V\left(  G^{\prime}\right)  $
is such that $\left\{  i,i_{1},\ldots,i_{r-1}\right\}  \in E\left(  G^{\prime
}\right)  ,$ then $\left\{  j,i_{1},\ldots,i_{r-1}\right\}  \in E\left(
G\right)  .$ However, if $\left\{  i_{1},\ldots,i_{r-1}\right\}  \subset
U\backslash\left\{  i\right\}  ,$ then $\left\{  j,i_{1},\ldots,i_{r-1}%
\right\}  \in E\left(  G\right)  ,$ but $\left\{  i,i_{1},\ldots
,i_{r-1}\right\}  \notin E\left(  G^{\prime}\right)  .$ Since $x_{i_{1}}\cdots
x_{i_{r-1}}>0,$ we see that $P_{G}\left(  \mathbf{x}^{\prime}\right)
>P_{G}\left(  \mathbf{x}\right)  .$ This contradiction completes the proof of
Proposition \ref{pro3}.
\end{proof}

\bigskip

Now we are ready to carry out the proof of Theorem \ref{th3}.

\begin{proof}
[\textbf{Proof of Theorem }\ref{th3}]Let $G$ be a $k$-chromatic $3$-graph of
order $n$ with maximum $p$-spectral radius. Propositions \ref{pro3} and
\ref{pro2} imply that $G$ is complete $k$-chromatic; let $V_{1},\ldots,V_{k}$
be the vertex sets of $G;$ for every $i\in\left[  k\right]  ,$ set $\left\vert
V_{i}\right\vert =n_{i}$ and suppose that $n_{1}\leq\cdots\leq n_{k}$. Assume
for a contradiction that $n_{k}-n_{1}\geq2.$ Proposition \ref{pro3} implies
that $\mathbf{x}$\textbf{ }is a positive eigenvector to $\lambda^{\left(
p\right)  }\left(  G\right)  ,$ and Proposition \ref{eqt1} implies that all
entries belonging to the same partition set are equal. For each $i\in\left[
k\right]  $ write $a_{i}$ for the value of the entries in $V_{i}.$ Clearly%
\[
P_{G}\left(  \mathbf{x}\right)  =\sum_{1\leq i<j\leq k}\left(  \binom{n_{i}%
}{2}n_{j}a_{i}^{2}a_{j}+\binom{n_{j}}{2}n_{i}a_{j}^{2}a_{i}\right)
+\sum_{1\leq i<j<m\leq k}n_{i}n_{j}n_{m}a_{i}a_{j}a_{m}.
\]
Set
\begin{align*}
S_{1}  &  =\sum_{1<i<j<k}\left(  \binom{n_{i}}{2}a_{i}^{2}+\binom{n_{j}}%
{2}a_{j}^{2}+n_{i}n_{j}a_{i}a_{j}\right)  ,\\
S_{2}  &  =%
{\displaystyle\sum\limits_{i=2}^{k-1}}
n_{i}a_{i}.
\end{align*}
Also, set $c=n_{1}a_{1}^{p}+n_{k}a_{k}^{p}.$

We shall exploit the following proof idea several times. We shall define a
complete $k$-chromatic graph $G^{\prime}$ with partition $V(G^{\prime}%
)=V_{1}^{\prime}\cup\cdots\cup V_{k}^{\prime},$ where $V_{1}^{\prime}\cup
V_{k}^{\prime}=V_{1}\cup V_{k},$ and $V_{i}^{\prime}=V_{i}$ for each $1<i<k.$
Thus, $G^{\prime}$ will be completely described by the numbers $m_{1}%
=\left\vert V_{1}^{\prime}\right\vert $ and $m_{k}=\left\vert V_{k}^{\prime
}\right\vert .$ Next we shall define an $n$-vector $\mathbf{y}$\textbf{ }which
coincides with\textbf{ }$\mathbf{x}$ on $V_{2}\cup\cdots\cup V_{k-1}$ and for
every $i\in V_{1}^{\prime}$ and $j\in V_{k}^{\prime}$ we shall set
$y_{i}=b_{1}$ and $y_{j}=b_{k}$, where $b_{1}$ and $b_{k\text{ }}$ are chosen
so that $m_{1}b_{1}^{p}+m_{k}b_{k}^{p}\leq c.$ Thus, $\mathbf{y}$ will be
completely described by the numbers $b_{1}$ and $b_{k\text{ }}.$ Also note
that $y_{1}^{p}+\cdots+y_{n}^{p}\leq1.$ Let us define the expressions%
\begin{align*}
P_{1}  &  =\left(  m_{1}b_{1}+m_{k}b_{k}\right)  -\left(  n_{1}a_{1}%
+n_{k}a_{k}\right)  ,\\
P_{2}  &  =\left(  \left(  m_{1}b_{1}+m_{k}b_{k}\right)  ^{2}-m_{1}b_{1}%
^{2}-m_{k}b_{k}^{2}\right)  -\left(  \left(  n_{1}a_{1}+n_{k}a_{k}\right)
^{2}-n_{1}a_{1}^{2}-n_{k}a_{k}^{2}\right)  ,\\
P_{3}  &  =\left(  \binom{m_{1}}{2}m_{k}b_{1}^{2}b_{k}+\binom{m_{k}}{2}%
m_{1}b_{k}^{2}b_{1}\right)  -\left(  \binom{n_{1}}{2}n_{k}a_{1}^{2}%
a_{k}+\binom{n_{k}}{2}n_{1}a_{k}^{2}a_{1}\right)  ,
\end{align*}
and note that
\[
P_{G^{\prime}}\left(  \mathbf{y}\right)  -P_{G}\left(  \mathbf{x}\right)
=P_{1}S_{1}+\frac{1}{2}P_{2}S_{2}+P_{3}.
\]
After choosing $m_{1},m_{k},$ $b_{1}$ and $b_{k},$ we shall show that
$P_{1}\geq0,$ $P_{2}\geq0$ and $P_{3}>0,$ which contradicts the choice of $G$.

First, suppose that $n_{k}+n_{1}=2l$ for some integer $l$. In this case let
\[
m_{1}=m_{k}=l,\text{ }b_{1}=b_{k}=\left(  \frac{c}{2l}\right)  ^{1/p}=b.
\]
Note first that $P_{1}\geq0$ follows by the PM inequality
\begin{equation}
n_{1}a_{1}+n_{k}a_{k}\leq2l\left(  \frac{n_{1}}{2l}a_{1}^{p}+\frac{n_{k}}%
{2l}a_{k}^{p}\right)  ^{1/p}=2lb. \label{mi}%
\end{equation}
To prove $P_{2}\geq0$ note that
\begin{align*}
\left(  n_{1}a_{1}+n_{k}a_{k}\right)  ^{2}-n_{1}a_{1}^{2}-n_{k}a_{k}^{2}  &
\leq\left(  n_{1}a_{1}+n_{k}a_{k}\right)  ^{2}-\frac{1}{2l}\left(  n_{1}%
a_{1}+n_{k}a_{k}\right)  ^{2}\\
&  =2l\left(  2l-1\right)  \left(  \frac{n_{1}}{2l}a_{1}+\frac{n_{k}}{2l}%
a_{k}\right)  ^{2}\\
&  \leq2l\left(  2l-1\right)  \left(  \frac{n_{1}}{2l}a_{1}^{p}+\frac{n_{k}%
}{2l}a_{k}^{p}\right)  ^{2/p}=2l\left(  2l-1\right)  b^{2}.
\end{align*}
Finally, we shall prove that $P_{3}>0.$ Let us start with the observation
\begin{align*}
\binom{n_{1}}{2}n_{k}a_{1}^{2}a_{k}+\binom{n_{k}}{2}n_{1}a_{k}^{2}a_{1}  &
=\frac{1}{2}n_{1}n_{k}a_{1}a_{k}\left(  n_{1}a_{1}+n_{k}a_{k}-a_{1}%
-a_{k}\right) \\
&  \leq\frac{1}{2}n_{1}n_{k}a_{1}a_{k}\left(  n_{1}a_{1}+n_{k}a_{k}%
-2\sqrt{a_{1}a_{k}}\right) \\
&  \leq n_{1}n_{k}a_{1}a_{k}\left(  lb-\sqrt{a_{1}a_{k}}\right)  .
\end{align*}
Now, since $z\left(  lb-\sqrt{z}\right)  $ is increasing for$\sqrt{z}\leq$
$2lb/3$ and
\[
\sqrt{a_{1}a_{k}}=\frac{1}{\sqrt{n_{1}n_{k}}}\sqrt{n_{1}a_{1}n_{k}a_{k}}%
\leq\frac{lb}{\sqrt{n_{1}n_{k}}}\leq\frac{lb}{\sqrt{3}}<\frac{2}{3}lb,
\]
we see that
\begin{align*}
\binom{n_{1}}{2}n_{k}a_{1}^{2}a_{k}+\binom{n_{k}}{2}n_{1}a_{k}^{2}a_{1}  &
\leq n_{1}n_{k}\frac{l^{2}b^{2}}{n_{1}n_{k}}\left(  lb-\frac{lb}{\sqrt
{n_{1}n_{k}}}\right) \\
&  <l^{2}\left(  l-1\right)  b^{3},
\end{align*}
completing the proof of $P_{3}>0$. Hence, $P_{G^{\prime}}\left(
\mathbf{y}\right)  >P_{G}\left(  \mathbf{x}\right)  ,$ contrary to the choice
of $G.$ This proves the theorem if $n_{k}+n_{1}=2l$ for some integer $l$.

Assume now that $n_{k}+n_{1}=2l+1$ for some integer $l.$ This is a more
difficult task, so we shall split the remaining part of the proof into two
cases (A) and (B) as follows:%
\begin{align*}
\text{(A) \ }p  &  >2\text{ or }\left(  l+1\right)  n_{1}a_{1}^{p}\leq
ln_{k}a_{k}^{p};\text{ }\\
\text{(B)\ \ }p  &  \leq2\text{ and }\left(  l+1\right)  n_{1}a_{1}^{p}%
>ln_{k}a_{k}^{p}.
\end{align*}

We start with (A), so assume that $p>2$ or $\left(  l+1\right)  n_{1}a_{1}%
^{p}\leq ln_{k}a_{k}^{p}$. Define $G^{\prime}$ and $\mathbf{y}$ by
\[
m_{1}=l,\text{ }m_{k}=l+1,\text{ }b_{1}=b_{k}=\left(  c/\left(  2l+1\right)
\right)  ^{1/p}=b.
\]
Again we shall prove that $P_{1}\geq0,$ $P_{2}\geq0$ and $P_{3}>0.$ First, the
PM\ inequality implies that
\[
n_{1}a_{1}+n_{k}a_{k}\leq\left(  2l+1\right)  \left(  \frac{n_{1}}{2l+1}%
a_{1}^{p}+\frac{n_{k}}{2l+1}a_{k}^{p}\right)  ^{1/p}=\left(  2l+1\right)  b,
\]
so $P_{1}\geq0.$ Next
\begin{align*}
\left(  n_{1}a_{1}+n_{k}a_{k}\right)  ^{2}-n_{1}a_{1}^{2}-n_{k}a_{k}^{2}  &
\leq\left(  n_{1}a_{1}+n_{k}a_{k}\right)  ^{2}-\frac{1}{2l+1}\left(
n_{1}a_{1}+n_{k}a_{k}\right)  ^{2}\\
&  =2l\left(  2l+1\right)  \left(  \frac{n_{1}}{2l+1}a_{1}+\frac{n_{k}}%
{2l+1}a_{k}\right)  ^{2}\\
&  \leq2l\left(  2l+1\right)  b^{2}.
\end{align*}
so $P_{2}\geq0.$ To prove that $P_{3}>0$, note that
\begin{align}
\binom{n_{1}}{2}n_{k}a_{1}^{2}a_{k}+\binom{n_{k}}{2}n_{1}a_{k}^{2}a_{1}  &
=\frac{1}{2}n_{1}n_{k}a_{1}a_{k}\left(  n_{1}a_{1}+n_{k}a_{k}-a_{1}%
-a_{k}\right) \nonumber\\
&  \leq\frac{1}{2}n_{1}n_{k}a_{1}a_{k}\left(  n_{1}a_{1}+n_{k}a_{k}%
-2\sqrt{a_{1}a_{k}}\right) \nonumber\\
&  \leq n_{1}n_{k}a_{1}a_{k}\left(  lb-\sqrt{a_{1}a_{k}}\right)  . \label{ex1}%
\end{align}
Our goal now is to bound from above the right side of (\ref{ex1}). Since the
expression $z\left(  lb-\sqrt{z}\right)  $ is increasing for$\sqrt{z}\leq$
$2lb/3,$ we focus on an upper bound on $\sqrt{a_{1}a_{k}}.$ To this end recall
that the condition of case (A) is a disjunction of two clauses. If the second
one is true, i.e., if $\left(  l+1\right)  n_{1}a_{1}^{p}\leq ln_{k}a_{k}%
^{p},$ then%
\[
n_{1}a_{1}^{p}\leq\frac{l}{2l+1}c\text{ \ and \ }n_{k}a_{k}^{p}\geq\frac
{l+1}{2l+1}c,
\]
and we find that%
\begin{align*}
n_{1}a_{1}^{p}n_{k}a_{k}^{p}  &  =\left(  \frac{c}{2}-\left(  \frac{c}%
{2}-n_{1}a_{1}^{p}\right)  \right)  \left(  \frac{c}{2}+\left(  \frac{c}%
{2}-n_{1}a_{1}^{p}\right)  \right)  =\frac{c^{2}}{4}-\left(  \frac{c}{2}%
-n_{1}a_{1}^{p}\right)  ^{2}\\
&  \leq\frac{c^{2}}{4}-\frac{c^{2}}{4\left(  2l+1\right)  ^{2}}=\frac{l\left(
l+1\right)  }{\left(  2l+1\right)  ^{2}}c^{2}.
\end{align*}
Hence,
\begin{equation}
\sqrt{a_{1}a_{k}}\leq\left(  \sqrt{n_{1}n_{k}}\right)  ^{-1/p}\left(
\frac{\sqrt{l\left(  l+1\right)  }}{2l+1}\right)  ^{1/p}c^{1/p}=\left(
\frac{l\left(  l+1\right)  }{n_{1}n_{k}}\right)  ^{1/2p}b.\nonumber
\end{equation}
Also, in view of $n_{1}n_{k}\geq4$ and $l\geq2$, we see that
\[
\sqrt{a_{1}a_{k}}\leq\left(  \frac{l\left(  l+1\right)  }{n_{1}n_{k}}\right)
^{1/2p}b<\left(  \frac{l\left(  l+1\right)  }{1\cdot4}\right)  ^{1/2}%
b\leq\left(  \frac{3l^{2}}{8}\right)  ^{1/2}b<\frac{2lb}{3}.
\]
Hence, to bound the right side of (\ref{ex1}) we replace$\sqrt{a_{1}a_{k}}$ by
$\left(  \frac{l\left(  l+1\right)  }{n_{1}n_{k}}\right)  ^{1/2p}b,$ thus
obtaining
\begin{align*}
\binom{n_{1}}{2}n_{k}a_{1}^{2}a_{k}+\binom{n_{k}}{2}n_{1}a_{k}^{2}a_{1}  &
\leq n_{1}n_{k}\left(  \frac{l\left(  l+1\right)  }{n_{1}n_{k}}\right)
^{1/p}b^{2}\left(  \frac{2l+1}{2}b-\left(  \frac{l\left(  l+1\right)  }%
{n_{1}n_{k}}\right)  ^{1/2p}b\right) \\
&  <l\left(  l+1\right)  b^{2}\left(  \frac{2l+1}{2}b-b\right)  =\frac
{l\left(  l+1\right)  \left(  2l-1\right)  }{2}b^{3}.
\end{align*}
Hence $P_{3}>0$ and $P_{G^{\prime}}\left(  \mathbf{y}\right)  >P_{G}\left(
\mathbf{x}\right)  ,$ contrary to the choice of $G.$ This completes the proof
of (A) if $\left(  l+1\right)  n_{1}a_{1}^{p}\leq ln_{k}a_{k}^{p}.$ To finish
the proof in case (A), assume that $p>2.$ Then%
\[
\sqrt{4n_{1}a_{1}^{p}n_{k}a_{k}^{p}}\leq c=\left(  2l+1\right)  b^{p},
\]
and we find that%
\begin{equation}
\sqrt{a_{1}a_{k}}\leq\left(  \frac{2l+1}{\sqrt{4n_{1}n_{k}}}\right)  ^{1/p}b.
\label{in3}%
\end{equation}
Also, in view of $n_{1}n_{k}\geq4$ and $l\geq2,$ we see\ that
\[
\sqrt{a_{1}a_{k}}\leq\left(  \frac{2l+1}{\sqrt{4n_{1}n_{k}}}\right)
^{1/p}b\leq\left(  \frac{2l+1}{\sqrt{4\cdot4}}\right)  ^{1/p}b<\left(
\frac{2l+1}{4}\right)  ^{1/2}b<\frac{2lb}{3}.
\]
Hence, to bound the right side of (\ref{ex1}) we replace$\sqrt{a_{1}a_{k}}$ by
$\left(  \frac{2l+1}{\sqrt{4n_{1}n_{k}}}\right)  ^{1/p}b,$ thus obtaining
\begin{align*}
\binom{n_{1}}{2}n_{k}a_{1}^{2}a_{k}+\binom{n_{k}}{2}n_{1}a_{k}^{2}a_{1}  &
\leq\frac{n_{1}n_{k}}{2}\left(  \frac{\left(  2l+1\right)  ^{2}}{4n_{1}n_{k}%
}\right)  ^{1/p}b^{2}\left(  \left(  2l+1\right)  b-2\left(  \frac{2l+1}%
{\sqrt{4n_{1}n_{k}}}\right)  ^{1/p}b\right) \\
&  <\frac{\left(  n_{1}n_{k}\right)  ^{1-1/p}}{2}\left(  \frac{\left(
2l+1\right)  ^{2}}{4}\right)  ^{1/p}\left(  2l-1\right)  b^{3}.
\end{align*}
To complete the proof we shall show that
\[
\left(  n_{1}n_{k}\right)  ^{1-1/p}\left(  \frac{\left(  2l+1\right)  ^{2}}%
{4}\right)  ^{1/p}\leq l\left(  l+1\right)  .
\]
Assume that this fails, that is to say,%
\[
\left(  \frac{\left(  2l+1\right)  ^{2}}{4l\left(  l+1\right)  }\right)
^{1/p}>\left(  \frac{l\left(  l+1\right)  }{\left(  n_{1}n_{k}\right)
}\right)  ^{1-1/p}.
\]
Hence, we find that%
\[
1+\frac{1}{4l\left(  l+1\right)  }=\frac{\left(  2l+1\right)  ^{2}}{4l\left(
l+1\right)  }>\left(  \frac{l\left(  l+1\right)  }{\left(  n_{1}n_{k}\right)
}\right)  ^{p-1}\geq\frac{l\left(  l+1\right)  }{\left(  l-1\right)  \left(
l+2\right)  }=1+\frac{2}{\left(  l-1\right)  \left(  l+2\right)  },
\]
which is false for $l\geq2.$ Hence, $P_{G^{\prime}}\left(  \mathbf{y}\right)
>P_{G}\left(  \mathbf{x}\right)  ,$ contrary to the choice of $G.$ This
completes the proof of case (A).

Now consider case (B). This time, define $G^{\prime}$ and $\mathbf{y}$ by%
\[
m_{1}=l,\text{ }m_{k}=l+1,\text{ }b_{1}=\frac{n_{1}a_{1}}{l},\text{ }%
b_{k}=\frac{n_{k}a_{k}}{l+1}.
\]
Our first goal is to show that
\[
lb_{1}^{p}+\left(  l+1\right)  b_{k}^{p}\leq n_{1}a_{1}^{p}+n_{k}a_{k}^{p},
\]
that is to say%
\begin{equation}
\frac{n_{1}^{p}a_{1}^{p}}{l^{p-1}}+\frac{n_{k}^{p}a_{k}^{p}}{\left(
l+1\right)  ^{p-1}}\leq n_{1}a_{1}^{p}+n_{k}a_{k}^{p}. \label{n}%
\end{equation}
Assume for a contradiction that
\[
\frac{n_{k}^{p}a_{k}^{p}}{\left(  l+1\right)  ^{p-1}}-n_{k}a_{k}^{p}%
>n_{1}a_{1}^{p}-\frac{n_{1}^{p}a_{1}^{p}}{l^{p-1}},
\]
implying that
\[
\frac{n_{1}a_{1}^{p}}{n_{k}a_{k}^{p}}<\frac{\left(  \frac{n_{k}}{l+1}\right)
^{p-1}-1}{1-\left(  \frac{n_{1}}{l}\right)  ^{p-1}}.
\]
Set $s=n_{k}-l-1=l-n_{1}$ and note that $p-1\leq1.$ Then, Bernoulli's
inequality implies that%
\[
\frac{n_{1}a_{1}^{p}}{n_{k}a_{k}^{p}}<\frac{\left(  \frac{n_{k}}{l+1}\right)
^{p-1}-1}{1-\left(  \frac{n_{1}}{l}\right)  ^{p-1}}=\frac{\left(  1+\frac
{s}{l+1}\right)  ^{p-1}-1}{1-\left(  1-\frac{s}{l}\right)  ^{p-1}}\leq
\frac{\frac{s}{l+1}}{\frac{s}{l}}=\frac{l}{l+1},
\]
contrary to the assumption of case (B). Therefore, (\ref{n}) holds.

Next, obviously $P_{1}=0;$ we shall show that $P_{2}\geq0$ and $P_{3}>0.$
Indeed,%
\[
\left(  n_{1}a_{1}+n_{k}a_{k}\right)  ^{2}-n_{1}a_{1}^{2}-n_{k}a_{k}%
^{2}=\left(  lb_{1}+\left(  l+1\right)  b_{k}\right)  ^{2}-n_{1}a_{1}%
^{2}-n_{k}a_{k}^{2},
\]
so to prove that $P_{2}\geq0$ it is enough to show that
\[
n_{1}a_{1}^{2}+n_{k}a_{k}^{2}\geq lb_{1}^{2}+\left(  l+1\right)  b_{k}%
^{2}=\frac{n_{1}^{2}a_{1}^{2}}{l}+\frac{n_{k}^{2}a_{k}^{2}}{l+1}.
\]
If this inequality failed, then%
\[
n_{1}a_{1}^{2}-\frac{n_{1}^{2}a_{1}^{2}}{l}<\frac{n_{k}^{2}a_{k}^{2}}%
{l+1}-n_{k}a_{k}^{2},
\]
and, in view of $n_{k}l>\left(  l+1\right)  n_{1}$ and $1/2\leq1/p,$ we
obtain
\[
\frac{a_{1}}{a_{k}}<\left(  \frac{n_{k}l}{\left(  l+1\right)  n_{1}}\right)
^{1/2}\leq\left(  \frac{n_{k}l}{\left(  l+1\right)  n_{1}}\right)  ^{1/p},
\]
contrary to the assumption of case (B). Therefore, $P_{2}\geq0.$

Finally, to prove $P_{3}>0,$ note that $n_{1}n_{k}a_{1}a_{k}=l\left(
l+1\right)  b_{1}b_{k}$ and we only need to prove that
\begin{align*}
\left(  n_{1}-1\right)  a_{1}+\left(  n_{k}-1\right)  a_{k}  &  <\left(
l-1\right)  b_{1}+lb_{k}\\
&  =\left(  l-1\right)  \frac{n_{1}a_{1}}{l}+l\frac{n_{k}a_{k}}{l+1}\\
&  =n_{1}a_{1}-\frac{n_{1}a_{1}}{l}+n_{k}a_{k}-\frac{n_{k}a_{k}}{l+1}.
\end{align*}
Indeed if the latter were false, we would have
\[
\frac{n_{k}-l-1}{l+1}a_{k}=\frac{n_{k}a_{k}}{l+1}-a_{k}\geq a_{1}-\frac
{n_{1}a_{1}}{l}=\frac{l-n_{1}}{l}a_{1},
\]
and so $a_{1}/a_{k}\leq l/\left(  l+1\right)  ,$ contrary to the assumption of
(B). Therefore, $P_{3}>0$ and $P_{G^{\prime}}\left(  \mathbf{y}\right)
>P_{G}\left(  \mathbf{x}\right)  ,$ contrary to the choice of $G.$ This
completes the proof of case (B).

In summary, the assumption $\left\vert n_{1}-n_{k}\right\vert \geq2$
contradicts that $G$ has maximum $p$-spectral radius among the $k$-chromatic
$3$-graphs of order $n$. Theorem \ref{th3} is proved.
\end{proof}

\subsection{Proof of theorem \ref{th4}}

It is convenient to prove a more abstract statement first.

\begin{theorem}
\label{th4.1} Let $\left[  n_{i}\right]  $ be a real $k$-vector such that
$n_{i}\geq1,$ $i=1,\ldots,k$ and $n_{1}+\cdots+n_{k}=n\geq k$ and let $\left[
a_{i}\right]  $ be a nonnegative $k$-vector with $n_{1}a_{1}+\cdots+n_{k}%
a_{k}=s$. Then the function
\[
R\left(  \left[  n_{i}\right]  ,\left[  a_{i}\right]  \right)  =\sum_{1\leq
i<j\leq k}\left(  \binom{n_{i}}{2}n_{j}a_{i}^{2}a_{j}+\binom{n_{j}}{2}%
n_{i}a_{j}^{2}a_{i}\right)  +\sum_{1\leq i<j<m\leq k}n_{i}n_{j}n_{m}a_{i}%
a_{j}a_{m}.
\]
satisfies
\[
R\left(  \left[  n_{i}\right]  ,\left[  a_{i}\right]  \right)  \leq\left(
\binom{n}{3}-k\binom{n/k}{3}\right)  \frac{s^{3}}{n^{3}},
\]
with equality holding if and only if $n_{1}=\cdots=n_{k}$ and $a_{1}%
=\cdots=a_{k}.$
\end{theorem}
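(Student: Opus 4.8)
The plan is to turn Theorem \ref{th4.1} into a constrained minimization and solve it by pairwise smoothing, much as in the proofs of Theorems \ref{th1} and \ref{th3}. Write $m_i=n_ia_i$, so that $m_i\ge0$ and $m_1+\cdots+m_k=s$. A short computation — expand $\sum_{i<j<m}n_in_jn_ma_ia_ja_m$ via Newton's identity for the third elementary symmetric function of the multiset consisting of $a_i$ with multiplicity $n_i$, and add the "within two classes" terms — yields the identity
\[
R\bigl([n_i],[a_i]\bigr)=\frac{s^{3}}{6}-\frac12\sum_{i=1}^{k}\frac{m_i^{2}(s-m_i)}{n_i}-\frac16\sum_{i=1}^{k}m_i^{3}.
\]
Plugging in $n_i=n/k$, $m_i=s/k$ one checks that the right-hand side equals $\bigl(\binom n3-k\binom{n/k}3\bigr)s^3/n^3$. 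Since $0\le m_i\le s$, the theorem is therefore equivalent to the statement that the functional
\[
\Phi\bigl([n_i],[m_i]\bigr):=\frac12\sum_{i}\frac{m_i^{2}(s-m_i)}{n_i}+\frac16\sum_{i}m_i^{3}
\]
attains its minimum over the compact set $\{\,[n_i]:n_i\ge1,\ \sum n_i=n\,\}\times\{\,[m_i]:m_i\ge0,\ \sum m_i=s\,\}$ only at $n_i=n/k$, $m_i=s/k$ for all $i$ (we may assume $s>0$, and for the equality clause that $(n,k)\ne(2,2)$; the remaining cases are trivial).

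Existence of a minimizer is clear by compactness, so it remains to locate it. I would smooth one pair of coordinates at a time. For the $n$-variables this is clean: for fixed $[m_i]$ the map $\nu\mapsto w/\nu$ is convex for each nonnegative weight $w_i:=m_i^{2}(s-m_i)$, hence $\Phi$ is convex along every segment $n_i\mapsto n_i+t$, $n_j\mapsto n_j-t$; consequently, at a minimizer, for every pair $i,j$ one has $w_i/n_i^{2}=w_j/n_j^{2}$ unless one of $n_i,n_j$ sits on the boundary value $1$. (A naive Cauchy--Schwarz step eliminating the $n_i$ gives a bound that is too weak precisely when some $n_i$ would want to be $<1$, so the constraint $n_i\ge1$ must be used directly — this is why the case $n=k$, where all $n_i=1$, has to be watched.) For the $m$-variables one fixes $[n_i]$ and smooths a pair $m_i,m_j$ with $m_i+m_j$ held fixed. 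Here convexity is not available, since for $n_i<3$ the term $-\tfrac12 m_i^{3}/n_i$ coming from $\tfrac16 m_i^{3}$ overwhelms the quadratic part; so, exactly as in the proof of Theorem \ref{th3}, one splits into subcases according to the sizes of the $n_i$ and of $m_i/m_j$, and in each subcase bounds the relevant one-variable expression using the Power Mean, AM--GM and Bernoulli inequalities to show that replacing the pair by a suitably balanced pair does not increase $\Phi$. Feeding the resulting first-order and boundary conditions back and forth between the two families forces all the $n_i$ equal and all the $m_i$ equal, i.e.\ $n_i=n/k$ and $m_i=s/k$, which is $n_1=\cdots=n_k$ and $a_1=\cdots=a_k$; tracking the equality cases of Maclaurin's inequality and of the strict convexity used for the $n_i$ gives uniqueness, hence the equality characterization.

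The main obstacle is the failure of joint convexity of $\Phi$ together with the two one-sided constraints $n_i\ge1$ and $m_i\ge0$: averaging a pair of coordinates, which would instantly finish a convex problem, can actually increase $\Phi$ when the $n_i$ are small, so the argument must be inequality-driven and split into the same kind of subcases that make the proof of Theorem \ref{th3} long and technical. A secondary nuisance is the bookkeeping of equality cases across all these subcases, and the need to treat separately the degenerate configurations in which a class collapses ($m_i=0$) or in which $n=k$ forces every $n_i=1$.
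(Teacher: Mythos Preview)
Your identity $R=s^{3}/6-\Phi$ with $\Phi=\tfrac12\sum_i m_i^{2}(s-m_i)/n_i+\tfrac16\sum_i m_i^{3}$ is correct and is a cleaner starting point than the paper's; the convexity of $\Phi$ in $[n_i]$ for fixed $[m_i]$ is also right. But the proposal has a real gap at the $m$-step. You promise to smooth $m_i,m_j$ at fixed $[n_i]$ ``by splitting into subcases as in Theorem~\ref{th3}'' and then to ``feed conditions back and forth'' between the two families until equality is forced. Neither step is carried out, and neither is routine: the one-variable piece $\phi_i(m)=\tfrac{m^{2}(s-m)}{2n_i}+\tfrac{m^{3}}{6}$ is concave for $m>s/(3-n_i)$ once $n_i<2$, so pairwise averaging of $m_i,m_j$ can \emph{increase} $\Phi$, and first-order conditions in a non-convex problem do not isolate the minimum. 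Nothing in your outline rules out, for instance, a minimizer with some $n_i=1$ absorbing a large $m_i$. The appeal to Theorem~\ref{th3} does not transfer directly either: there the two quantities being smoothed are tied by a single constraint $n_1a_1^{p}+n_ka_k^{p}=c$, whereas here $\sum n_i$ and $\sum m_i$ are independent constraints, which is exactly what makes separate smoothing awkward.

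The paper bypasses this by smoothing a pair of classes \emph{jointly}: given $n_1<n_k$ it replaces $(n_1,a_1),(n_k,a_k)$ by $(l,c/(2l))$ twice, with $l=(n_1+n_k)/2$ and $c=n_1a_1+n_ka_k$ --- in your variables this is simply replacing $(n_1,m_1),(n_k,m_k)$ by their common midpoint $(\bar n,\bar m)$ --- and shows via a short AM--GM chain that $R$ strictly increases. That single move forces $n_1=\cdots=n_k$ at the maximum. The remaining equal-$n_i$ problem (your $\Phi$ with all $n_i=l$) is then handled as a separate Proposition~\ref{pro}, by induction on $k$ together with concavity of $x(1-x)((l-3)x+l)$ for $l\ge 2$ and a two-line Lagrange argument for $1\le l<2$. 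Your identity would in fact streamline Proposition~\ref{pro}, but it does not by itself supply the missing $m$-smoothing.
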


To simplify the proof of Theorem \ref{th4.1} we prove an auxiliary statement first.

\begin{proposition}
\label{pro} If $n_{1}=\cdots=n_{k}=l\geq1,$ and the $k$-vector $\left[
a_{i}\right]  $ satisfies $a_{1}+\cdots+a_{k}=t,$ then%
\[
R\left(  \left[  n_{i}\right]  ,\left[  a_{i}\right]  \right)  <\left(
\binom{kl}{3}-k\binom{l}{3}\right)  \frac{t^{3}}{k^{3}},
\]
unless $a_{1}=\cdots=a_{k}.$
\end{proposition}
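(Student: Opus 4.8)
The plan is to expand $R$, in the case where all parts equal $l$, as a symmetric polynomial in $a_1,\dots,a_k$ and then to bound it by two classical inequalities that are extremised at the same vector. Write $e_2,e_3$ for the second and third elementary symmetric functions and $p_2=\sum_i a_i^2$, $p_3=\sum_i a_i^3$ for the power sums of $a_1,\dots,a_k$. Using $\sum_{i\ne j}a_i^2a_j=t\,e_2-3e_3=t\,p_2-p_3$ and Newton's identity $6e_3=t^3-3t\,p_2+2p_3$, a short computation starting from $R([n_i],[a_i])=\binom{l}{2}l\sum_{i\ne j}a_i^2a_j+l^3e_3$ yields the two equivalent forms
\[
R\bigl([n_i],[a_i]\bigr)=\binom{l}{2}l\,t\,e_2+\frac{l^2(3-l)}{2}\,e_3=\frac{l^3t^3}{6}-\frac{l^2t}{2}\,p_2+\frac{l^2(3-l)}{6}\,p_3 .
\]
The whole argument then turns on the sign of $3-l$ (we may assume $kl\ge 3$, as otherwise $R\equiv 0$).

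For $1\le l\le 3$ I would use the first form: both coefficients $\binom{l}{2}l\,t$ and $\tfrac12 l^2(3-l)$ are nonnegative, so Maclaurin's inequality, in the form $e_2\le\binom{k}{2}(t/k)^2$ and $e_3\le\binom{k}{3}(t/k)^3$, gives
\[
R\bigl([n_i],[a_i]\bigr)\le\binom{l}{2}l\,t\binom{k}{2}\Bigl(\frac{t}{k}\Bigr)^{2}+\frac{l^2(3-l)}{2}\binom{k}{3}\Bigl(\frac{t}{k}\Bigr)^{3},
\]
and a routine simplification identifies the right-hand side with $\bigl(\binom{kl}{3}-k\binom{l}{3}\bigr)t^3/k^3$. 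Whenever $t>0$ at least one of the two coefficients is strictly positive (the coefficient of $e_2$ if $l>1$, that of $e_3$ if $l=1$), so the relevant Maclaurin estimate is strict unless $a_1=\cdots=a_k$, which forces the strict inequality.

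For $l\ge 3$ the coefficient of $e_3$ can be negative, so instead I would use the second form, rewritten as
\[
R\bigl([n_i],[a_i]\bigr)=\frac{l^3t^3}{6}-\left(\frac{l^2t}{2}\,p_2+\frac{l^2(l-3)}{6}\,p_3\right),
\]
where now both bracketed coefficients are nonnegative. The Power Mean inequality gives $p_2\ge t^2/k$ and $p_3\ge t^3/k^2$, each with equality only when $a_1=\cdots=a_k$, so the bracket is minimised exactly at the balanced vector; substituting and simplifying once more produces the same bound $\bigl(\binom{kl}{3}-k\binom{l}{3}\bigr)t^3/k^3$, strictly unless all $a_i$ are equal (the coefficient $\tfrac12 l^2t$ of $p_2$ being positive for $t>0$). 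Finally, inserting $a_1=\cdots=a_k=t/k$ in either form of $R$ shows equality is attained there.

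All the computation is routine; the two things needing care are deriving the symmetric-function expansions of $R$ correctly and spotting the dichotomy: for small parts one bounds $R$ from above through the elementary symmetric functions via Maclaurin, whereas for large parts the useful estimate is the \emph{lower} bound on the power sums via Power Mean --- in both regimes the bound is tight precisely at the balanced vector. If anything is an obstacle it is only bookkeeping, namely verifying in each case that the surviving leading coefficient is strictly positive, so that equality occurs only for the uniform vector.
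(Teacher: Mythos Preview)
Your argument is correct and genuinely different from the paper's. The paper normalises to $t=1$, rewrites $R$ as $\tfrac{l^{2}}{6}\sum_{i}f(a_{i})$ with $f(x)=x(1-x)((l-3)x+l)$, and then maximises $\sum f(a_i)$ by a case split on $l$: for $l\ge 2$ the function $f$ is concave on $(0,1)$ (via $f''<0$), so Jensen gives the balanced maximum; for $1\le l<2$ a Lagrange-multiplier argument shows that two unequal critical coordinates would have to sum to $2/(3-l)>1$, a contradiction. An induction on $k$ handles the possibility of zero entries. Your route instead keeps $R$ in symmetric-function form and splits according to the sign of $3-l$: for $l\le 3$ you bound $e_2,e_3$ above by Maclaurin, for $l\ge 3$ you bound $p_2,p_3$ below by the Power Mean inequality, and in both regimes the extremal vector is the balanced one. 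This avoids both the induction and the Lagrange step, at the cost of the algebra needed to derive the two equivalent expansions; it is arguably cleaner here, while the paper's reduction to a sum of a single univariate function is a device that may transfer more readily to other settings.

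One small correction: your parenthetical ``we may assume $kl\ge 3$, as otherwise $R\equiv 0$'' is not accurate for real $l$ (e.g.\ $k=2$, $l=1.4$ gives $kl<3$ but $R\not\equiv 0$). This does not affect your proof, since your $l\le 3$ branch already covers that range via Maclaurin on $e_2$; the only genuinely degenerate case is $l=1$, $k=2$, where both sides vanish identically --- and the paper's proof has the same blind spot there.
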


\begin{proof}
Since $R\left(  \left[  n_{i}\right]  ,\left[  a_{i}\right]  \right)  $ is
homogenous of degree $3$ with respect to $\left[  a_{i}\right]  $, without
loss of generality we may assume that $t=1.$ Also, let us note that
\begin{align*}
\left(  \binom{kl}{3}-k\binom{l}{3}\right)  \frac{1}{k^{3}}  &  =\frac{l}%
{6}\cdot\frac{\left(  lk-1\right)  \left(  lk-2\right)  -\left(  l-1\right)
\left(  l-2\right)  }{k^{2}}\\
&  =\frac{l^{2}}{6}\left(  l\left(  1-\frac{1}{k^{2}}\right)  -\frac{3\left(
k-1\right)  }{k^{2}}\right)  ,
\end{align*}
and furthermore if $a_{1}=\cdots=a_{k},$ then indeed
\[
R\left(  \left[  n_{i}\right]  ,\left[  a_{i}\right]  \right)  =\frac{l^{2}%
}{6}\cdot k\cdot\frac{1}{k}\left(  1-\frac{1}{k}\right)  \left(  \left(
l-3\right)  \frac{1}{k}+l\right)  =\left(  \binom{kl}{3}-k\binom{l}{3}\right)
\frac{1}{k^{3}}.
\]
We shall prove the proposition by induction on $k.$ For $k=2,$ the AM-GM
inequality implies that
\[
R\left(  \left[  n_{i}\right]  ,\left[  a_{i}\right]  \right)  =\frac
{l^{2}\left(  l-1\right)  }{2}a_{1}a_{2}\left(  a_{1}+a_{2}\right)  \leq
\frac{l^{2}\left(  l-1\right)  }{8}=\frac{l^{2}}{6}\left(  l\left(  1-\frac
{1}{4}\right)  -\frac{3}{4}\right)  ,
\]
so the assertion holds. Assume that $k>2$ and that the assertion holds for all
$k^{\prime}<k.$ Let $R\left(  \left[  n_{i}\right]  ,\left[  a_{i}\right]
\right)  $ attain maximum for some vector $\left[  a_{i}\right]  $ with
$a_{1}+\cdots+a_{k}=1.$ If $a_{i}=0$ for some $i\in\left[  k\right]  ,$ then,
by the induction assumption, and in view of $l\geq1$ and $k\geq3,$
\[
R\left(  \left[  n_{i}\right]  ,\left[  a_{i}\right]  \right)  \leq\frac
{l^{2}}{6}\left(  l\left(  1-\frac{1}{\left(  k-1\right)  ^{2}}\right)
-\frac{3\left(  k-2\right)  }{\left(  k-1\right)  ^{2}}\right)  <\frac{l^{2}%
}{6}\left(  l\left(  1-\frac{1}{k^{2}}\right)  -\frac{3\left(  k-1\right)
}{k^{2}}\right)  ,
\]
proving the assertion. So we shall assume that $a_{i}>0$ for all $i\in\left[
k\right]  .$

Our next task is to rewrite $R\left(  \left[  n_{i}\right]  ,\left[
a_{i}\right]  \right)  $ in a more convenient form. Note that
\begin{align*}
R\left(  \left[  n_{i}\right]  ,\left[  a_{i}\right]  \right)   &
=\frac{l^{2}\left(  l-1\right)  }{2}\sum_{1\leq i<j\leq k}\left(  a_{i}%
^{2}a_{j}+a_{j}^{2}a_{i}\right)  +l^{3}\sum_{1\leq i<j<m\leq k}a_{i}a_{j}%
a_{m}\\
&  =\frac{l^{2}}{2}\left(  \left(  l-1\right)  \sum_{i=1}^{k}a_{i}^{2}\left(
1-a_{i}\right)  +2l\sum_{1\leq i<j<m\leq k}a_{i}a_{j}a_{m}\right)  .
\end{align*}
Also, it is not hard to see that%
\begin{align*}
2l\sum_{1\leq i<j<m\leq k}a_{i}a_{j}a_{m}  &  =\frac{2l}{3}\sum_{1\leq i<j\leq
k}a_{i}a_{j}\left(  1-a_{i}-a_{j}\right) \\
&  =\frac{2l}{3}\sum_{1\leq i<j\leq k}a_{i}a_{j}-\frac{2l}{3}\sum_{1\leq
i<j\leq k}a_{i}a_{j}\left(  a_{i}+a_{j}\right) \\
&  =\frac{l}{3}\sum_{i=1}^{k}a_{i}\left(  1-a_{i}\right)  -\frac{2l}{3}%
\sum_{i=1}^{k}a_{i}^{2}\left(  1-a_{i}\right) \\
&  =\sum_{i=1}^{k}a_{i}\left(  1-a_{i}\right)  \left(  \frac{l}{3}-\frac
{2l}{3}a_{i}\right)  .
\end{align*}
Therefore,
\begin{align*}
R\left(  \left[  n_{i}\right]  ,\left[  a_{i}\right]  \right)   &
=\frac{l^{2}}{2}\left(  \sum_{i=1}^{k}\left(  l-1\right)  a_{i}^{2}\left(
1-a_{i}\right)  +\sum_{i=1}^{k}a_{i}\left(  1-a_{i}\right)  \left(  \frac
{l}{3}-\frac{2l}{3}a_{i}\right)  \right) \\
&  =\frac{l^{2}}{6}\sum_{i=1}^{k}a_{i}\left(  1-a_{i}\right)  \left(  \left(
l-3\right)  a_{i}+l\right) \\
&  =\frac{l^{2}}{6}\sum_{i=1}^{k}f\left(  a_{i}\right)  ,
\end{align*}
where
\[
f\left(  x\right)  =x\left(  1-x\right)  \left(  \left(  l-3\right)
x+l\right)  .
\]

The second derivative of $f\left(  x\right)  $ is $6\left(  3-l\right)  x-6;$
and so if $l\geq2,$ then $f^{\prime\prime}\left(  x\right)  <0$ for $0<x<1;$
that is to say, if $l\geq2,$ then $f\left(  x\right)  $ is concave for
$0<x<1;$ and hence $R\left(  \left[  n_{i}\right]  ,\left[  a_{i}\right]
\right)  $ attaines maximum if and only if $a_{1}=\cdots=a_{k}.$

Fianlly, let $l<2.$ By the Lagrange method, there exists $\lambda$ such that
$f^{\prime}\left(  a_{i}\right)  =\lambda$ for all $i\in\left[  k\right]  .$
Hence, if $a_{i}\neq a_{j},$ then $a_{i}$ and $a_{j}$ are the two roots of the
quadratic equation
\[
l-6x-3\left(  l-3\right)  x^{2}=\lambda,
\]
and so%
\[
a_{i}+a_{j}=-\frac{-6}{-3\left(  l-3\right)  }=\frac{2}{3-l}>1,
\]
a contradiction, showing that $a_{1}=\cdots=a_{k}.$ The induction step is
completed and Proposition \ref{pro} is proved.
\end{proof}

\bigskip

\begin{proof}
[\textbf{Proof of Theorem \ref{th4.1}}]Since $R$ is continuous in each
variable $n_{1},\ldots,n_{k},a_{1},\ldots,a_{k}$, and its domain is compact,
it attains a maximum for some $n_{1},\ldots,n_{k}$ and $a_{1},\ldots,a_{k}.$
First we shall prove the statement under the assumption that all $a_{1}%
,\ldots,a_{k}$ are positive. Also, by symmetry, we assume that $n_{1}%
\leq\cdots\leq n_{k\text{ }}.$ Our proof is by contradiction, so assume that
$n_{1}<n_{k}$ and set
\begin{align*}
S_{1}  &  =\sum_{1<i<j<k}\left(  \binom{n_{i}}{2}a_{i}^{2}+\binom{n_{j}}%
{2}a_{j}^{2}+n_{i}n_{j}a_{i}a_{j}\right)  ,\\
S_{2}  &  =%
{\displaystyle\sum\limits_{i=2}^{k-1}}
n_{i}a_{i},\\
c  &  =n_{1}a_{1}+n_{k}a_{k},\\
l  &  =\left(  n_{1}+n_{k}\right)  /2,
\end{align*}
The proof proceeds along the following line: we replace $\left[  n_{i}\right]
$ and $\left[  a_{i}\right]  $ by two $k$-vectors $\left[  m_{i}\right]  $ and
$\left[  b_{i}\right]  $ satisfying%
\begin{align*}
m_{1}  &  =m_{k}=l,\text{ \ }m_{i}=n_{i},\text{ \ }1<i<k\text{,}\\
b_{1}  &  =b_{k}=\frac{c}{2l},\text{ }b_{i}=a_{i},\text{ \ }1<i<k.
\end{align*}
Clearly $\left[  m_{i}\right]  $ and $\left[  b_{i}\right]  $ satisfy the
conditions for $\left[  n_{i}\right]  $ and $\left[  a_{i}\right]  ,$ but we
shall show that
\[
R\left(  \left[  m_{i}\right]  ,\left[  b_{i}\right]  \right)  >R\left(
\left[  n_{i}\right]  ,\left[  a_{i}\right]  \right)  ,
\]
which is a contradiction, due to the assumption that $n_{1}<n_{k}.$

To carry out this strategy, let
\begin{align*}
R_{1}  &  =\left(  lb_{1}+lb_{k}\right)  -\left(  n_{1}a_{1}+n_{k}%
a_{k}\right)  ,\\
R_{2}  &  =\left(  \left(  lb_{1}+lb_{k}\right)  ^{2}-lb_{1}^{2}-lb_{k}%
^{2}\right)  -\left(  \left(  n_{1}a_{1}+n_{k}a_{k}\right)  ^{2}-n_{1}%
a_{1}^{2}-n_{k}a_{k}^{2}\right)  ,\\
R_{3}  &  =\left(  \binom{l}{2}lb_{1}^{2}b_{k}+\binom{l}{2}lb_{k}^{2}%
b_{1}\right)  -\left(  \binom{n_{1}}{2}n_{k}a_{1}^{2}a_{k}+\binom{n_{k}}%
{2}n_{1}a_{k}^{2}a_{1}\right)  .
\end{align*}
and note that
\[
R\left(  \left[  m_{i}\right]  ,\left[  b_{i}\right]  \right)  -R\left(
\left[  n_{i}\right]  ,\left[  a_{i}\right]  \right)  =R_{1}S_{1}+\frac{1}%
{2}R_{2}S_{2}+R_{3}.
\]

Obviously $R_{1}=0;$ we shall prove that $R_{2}\geq0$ and $R_{3}>0.$ Indeed,
\[
n_{1}a_{1}^{2}+n_{k}a_{k}^{2}\geq2l\left(  \frac{n_{1}}{2l}a_{1}+\frac{n_{k}%
}{2l}a_{k}\right)  ^{2}=lb_{1}^{2}+lb_{k}^{2},
\]
and this together with $R_{1}=0,$ implies that $R_{2}\geq0.$ Finally note
that
\[
a_{1}a_{k}\leq\frac{c^{2}}{4n_{1}n_{k}}%
\]
and so,
\begin{align*}
n_{1}n_{k}a_{1}a_{k}\left(  n_{1}a_{1}+n_{k}a_{k}-a_{1}-a_{k}\right)   &  \leq
n_{1}n_{k}a_{1}a_{k}\left(  c-2\sqrt{a_{1}a_{k}}\right) \\
&  \leq\frac{c^{2}}{4}\left(  c-\frac{c}{\sqrt{n_{1}n_{k}}}\right)
<\frac{c^{3}}{4}\left(  1-\frac{1}{l}\right) \\
&  =l^{2}b_{1}b_{k}\left(  l-1\right)  \left(  b_{1}+b_{k}\right)  .
\end{align*}
Hence, $R\left(  \left[  m_{i}\right]  ,\left[  b_{i}\right]  \right)
>R\left(  \left[  n_{i}\right]  ,\left[  a_{i}\right]  \right)  ,$ a
contradiction showing that $n_{1}=\cdots=n_{k}.$ This completes the proof if
all $a_{1},\ldots,a_{k}$ are positive.

In the general case assume that $a_{1}>0,\ldots,a_{s}>0,$ $a_{s+1}%
=\cdots=a_{k}=0.$ By what we already proved, we have $n_{1}=\cdots=n_{s},$ and
Proposition \ref{pro} implies that
\begin{align*}
R\left(  \left[  n_{i}\right]  ,\left[  a_{i}\right]  \right)   &  \leq
\binom{m}{3}\frac{1}{m^{3}}-s\binom{m/s}{3}\frac{1}{m^{3}}=\frac{\left(
s-1\right)  }{6s^{2}}\left(  \left(  s+1\right)  -\frac{3s}{m}\right) \\
&  \leq\binom{n}{3}\frac{1}{n^{3}}-s\binom{n/s}{3}\frac{1}{n^{3}}<\binom{n}%
{3}\frac{1}{n^{3}}-k\binom{n/k}{3}\frac{1}{n^{3}}.
\end{align*}
An easy inspection shows that equality holds if and only if $s=k.$ Theorem
\ref{th4.1} is proved.
\end{proof}

\bigskip

\begin{proof}
[\textbf{Proof of Theorem \ref{th4}}]Since (I) is a particular case of
Proposition \ref{pth4}, we proceed directly with (II). For $p=1$ the assertion
follows from Theorem \ref{th4.1}. Let $p>1$ and let $\left[  x_{i}\right]  $
be a positive eigenvector to $\lambda^{\left(  p\right)  }\left(  G\right)  .$
The PM inequality implies that
\begin{equation}
\lambda^{\left(  p\right)  }\left(  G\right)  =r!\sum_{\left\{  i_{1}%
,\ldots,i_{r}\right\}  \in E\left(  G\right)  }x_{i_{1}}\cdots x_{i_{r}}\leq
r!e\left(  G\right)  ^{1-1/p}\left(  \sum_{\left\{  i_{1},\ldots
,i_{r}\right\}  \in E\left(  G\right)  }x_{i_{1}}^{p}\cdots x_{i_{r}}%
^{p}\right)  ^{1/p}. \label{in2}%
\end{equation}
Now, letting $\mathbf{y}:=\left(  x_{1}^{p},\ldots,x_{n}^{p}\right)  $,
Theorem \ref{th4.1} implies that
\[
\sum_{\left\{  i_{1},\ldots,i_{r}\right\}  \in E\left(  G\right)  }x_{i_{1}%
}^{p}\cdots x_{i_{r}}^{p}\leq\left(  \binom{n}{3}-k\binom{n/k}{3}\right)
\frac{1}{n^{3}}.
\]
On the other hand, $G$ must be complete $k$-chromatic, say with vertex classes
of sizes $n_{1},\ldots,n_{k}.$ Define the function
\[
f\left(  x\right)  =\left\{
\begin{array}
[c]{ll}%
0, & \text{if }x\leq2\\
\binom{x}{3}, & \text{if }x>2
\end{array}
\right.  ,
\]
and note that $f\left(  x\right)  $ is convex. Therefore
\begin{align*}
e\left(  G\right)   &  =\binom{n}{3}-\sum_{i=1}^{k}\binom{n_{i}}{3}=\binom
{n}{3}-\sum_{i=1}^{k}f\left(  n_{i}\right)  \leq\binom{n}{3}-kf\binom{n/k}%
{3}\\
&  =\binom{n}{3}-k\binom{n/k}{3}.
\end{align*}
We used above the fact that $n>2k.$

Hence, replacing $e\left(  G\right)  $ in the right side of (\ref{in2}), we
find that
\begin{align*}
\lambda^{\left(  p\right)  }\left(  G\right)   &  \leq3!\left(  \binom{n}%
{3}-k\binom{n/k}{3}\right)  ^{1-1/p}\left(  \binom{n}{3}-k\binom{n/k}%
{3}\right)  ^{1/p}n^{-3/p}\\
&  =3!\left(  \binom{n}{3}-k\binom{n/k}{3}\right)  n^{-3/p}.\text{ }%
\end{align*}

If equality holds, then we must have also
\[
\sum_{i=1}^{k}f\left(  n_{i}\right)  =kf\left(  n/k\right)  .
\]
An easy inspection shows this can happen only if $n_{1}=\cdots=n_{k}=n/k$.
Therefore, $k|n$ and $G=Q_{k}^{r}\left(  n\right)  .$
\end{proof}

\subsection{Proof of Theorem \ref{th5}}

\begin{proof}
The key to our proof is an appropriate bound on $\lambda^{\left(  1\right)
}\left(  G\right)  $. Let $G$ be $k$-chromatic $r$-graph of order $n$ with
maximum $\lambda^{\left(  1\right)  }$. Propositions \ref{pro3} and \ref{pro2}
imply that $G$ is complete $k$-chromatic; let $V_{1},\ldots,V_{k}$ be the
vertex sets of $G;$ for each $i\in\left[  k\right]  ,$ set $\left\vert
V_{i}\right\vert =n_{i}.$

Let $\left[  x_{i}\right]  $\textbf{ }be a nonnegative eigenvector to
$\lambda^{\left(  1\right)  }\left(  G\right)  ;$ Proposition \ref{eqt1}
implies that all entries belonging to the same partition set are equal, so for
each $i\in\left[  k\right]  ,$ let $a_{i}$ be the value of the entries in
$V_{i}.$

Write $V^{\left[  r\right]  }$ for the set of $r$-permutations of $V,$ that is
to say, the set of all vectors $\left(  i_{1},\ldots,i_{r}\right)  \in
V^{r\text{ }}$ with distinct entries. Note that%
\[
P_{G}\left(  \left[  x_{i}\right]  \right)  =\sum\left\{  x_{i_{1}}\cdots
x_{i_{r}}:\left(  i_{1},\ldots,i_{r}\right)  \in V^{\left[  r\right]  }\text{
and }\left(  i_{1},\ldots,i_{r}\right)  \notin V_{j}^{r},\text{ }%
j=1,\ldots,k\right\}  .
\]
Hence, we see that
\begin{align*}
P_{G}\left(  \left[  x_{i}\right]  \right)   &  \leq\sum\left\{  x_{i_{1}%
}\cdots x_{i_{r}}:\left(  i_{1},\ldots,i_{r}\right)  \in V^{r\text{ }}\text{
and }\left(  i_{1},\ldots,i_{r}\right)  \notin V_{j}^{r},\text{ }%
j=1,\ldots,k\right\} \\
&  =\left(  \sum_{i=1}^{n}x_{i}\right)  ^{r}-\sum_{j=1}^{k}\sum\left\{
x_{i_{1}}\cdots x_{i_{r}}:\left(  i_{1},\ldots,i_{r}\right)  \in V_{j}%
^{r}\right\} \\
&  =1-\sum_{j=1}^{k}\left(  n_{j}a_{j}\right)  ^{r}\leq1-k\left(  \frac{1}%
{k}\sum_{j=1}^{k}n_{j}a_{j}\right)  ^{r}=1-k^{-r+1}.
\end{align*}
Therefore,%
\[
\lambda^{\left(  1\right)  }\left(  G\right)  \leq1-k^{-r+1}.
\]
Let now $p>1,$ and let $\left[  x_{i}\right]  $\textbf{ }be a nonnegative
eigenvector to $\lambda^{\left(  p\right)  }\left(  G\right)  .$ The PM
inequality implies that
\[
\lambda^{\left(  p\right)  }\left(  G\right)  =r!\sum_{\left\{  i_{1}%
,\ldots,i_{r}\right\}  \in E\left(  G\right)  }x_{i_{1}}\cdots x_{i_{r}}\leq
r!e\left(  G\right)  ^{1-1/p}\left(  \sum_{\left\{  i_{1},\ldots
,i_{r}\right\}  \in E\left(  G\right)  }x_{i_{1}}^{p}\cdots x_{i_{r}}%
^{p}\right)  ^{1/p}.
\]
Now, letting $\mathbf{y}:=\left(  x_{1}^{p},\ldots,x_{n}^{p}\right)  ,$ we see
that
\[
\sum_{\left\{  i_{1},\ldots,i_{r}\right\}  \in E\left(  G\right)  }x_{i_{1}%
}^{p}\cdots x_{i_{r}}^{p}\leq\frac{\lambda^{\left(  1\right)  }\left(
G\right)  }{r!}\leq\frac{\left(  1-k^{-r+1}\right)  }{r!},
\]
implying inequality (\ref{in4}). Now, to get inequality (\ref{in5}), notice
that
\[
r!e\left(  G\right)  \leq r!\binom{n}{r}-kr!\binom{n/k}{r}<n^{r}\left(
1-\frac{1}{k^{-r+1}}\right)  ,
\]
and (\ref{in5}) follows from (\ref{in4}).
\end{proof}

\subsection{Acknowledgement}

Part of this work has been done, while the second author was visiting Shanghai
University and Hong Kong Polytechnic University in the Fall of 2013. He is
grateful for the outstanding hospitality of these institutions.\ The research
of the first author was supported by some grants (No. 11171207, No. 91130032),
and by a grant of \textquotedblleft The First-class Discipline of Universities
in Shanghai\textquotedblright. The research of the third author was supported
by National Science Foundation of China (No. 11101263), and by a grant of
\textquotedblleft The First-class Discipline of Universities in
Shanghai\textquotedblright.

\end{document}